  \definecolor{mycolor}{rgb}{0.2,0.3,1}%{0,0.65,0.5}%{0.125,0.25,0}%
\newcommand{\C}{\ensuremath{\mathbb{C}}\xspace}
\newcommand{\ma}{\ensuremath{\mathcal{A}}\xspace}
\newcommand{\mb}{\ensuremath{\mathcal{B}}\xspace}
\newcommand{\mc}{\ensuremath{\mathcal{C}}\xspace}
\newcommand{\pr}[2]{\ensuremath{\langle {#1},{#2}\rangle}}
\newcommand{\norm}[1]{\ensuremath{\|#1\|}}
\newcommand{\Mb}{\mathbb{M}}
\newcommand{\Nb}{\mathbb{N}}
\newcommand{\tb}{\mathbb{t}}
\DeclareMathOperator{\supp}{supp}
\theoremstyle{plain}
\newtheorem{thm}{Theorem}[section]
\newtheorem{prop}[thm]{Proposition}
\newtheorem{lem}[thm]{Lemma}
\newtheorem{cor}[thm]{Corollary}          
\theoremstyle{definition}
\newtheorem{defn}[thm]{Definition}
\newtheorem{exmp}[thm]{Example}
\theoremstyle{remark} 
\newtheorem{rem}[thm]{Remark}
\newcounter{horas}\newcounter{minutos}
\newcommand{\printtime}{%
\setcounter{horas}{\time/60}
\setcounter{minutos}{\time-\value{horas}*60}%
\ifthenelse{\value{horas}<10}{0}{}\thehoras:%
\ifthenelse{\value{minutos}<10}{0}{}\theminutos\ %
}
\begin{document}

\begin{comment}
\vspace*{-3cm}
\begin{flushright}
\footnotesize{\textsf{Preliminary version, \today}}
\end{flushright}
\vspace*{3cm}
\end{comment}

\title[Conditional expectations on Fell bundles]{Conditional expectations on Fell bundles\date{October 2021}}
\author{Fernando Abadie}
\address{CMAT, FC, Universidad de la
    Rep\'ublica. Igu\'a~4225, 11400, Montevideo,
    URUGUAY}
\email{fabadie@cmat.edu.uy}

\maketitle

\begin{comment}
\begin{flushright}
\begin{minipage}{50ex}
{\scriptsize {\tt Last modification: October 28, 2021.\\
Compiled: \printtime of \today.}}
\end{minipage}
\end{flushright}
\vspace*{4ex}
\end{comment}

\begin{comment}
\vspace*{-7cm}
\begin{flushright}
\textsf{Preliminary version}\ \ \ \ \ \\
\textsf{(still under elaboration)}
\end{flushright}
\vspace*{7cm}
\end{comment}
\begin{abstract}
We show that the existence of a continuous conditional expectation
from a Fell bundle to a Fell subbundle implies that the full
cross-sectional C*-algebra of the subbundle is contained in the full
cross-sectional C*-algebra of the bundle, and moreover there exists a
conditional expectation from the latter algebra to the former.
A similar fact holds for the reduced cross-sectional C*-algebras. Besides, the subbundle is amenable whenever so is the bundle.      
\end{abstract}
\textbf{\textcolor{mycolor}{\tableofcontents}}
%\tableofcontents

\section{Introduction and preliminaries}

\par A very important concept in probability theory is that of conditional expectation. Adapted to the “non-commutative” realm of operator algebras,
here too it became an important tool, first for von Neumann algebras and then for C* algebras. In this case, the conditional expectation is a certain type of linear operator from an algebra to a subalgebra (we refer to \cite{kad} for a nice description and applications of this passage from the classical probability theory to the world of operator algebras).
\par If $A$ is a C*-subalgebra of a C*-algebra $B$, a conditional expectation from $B$ to $A$ is a linear map $E:B\to A$ such that $E^2=E$ (i.e.: $E$ is an idempotent), $E(B)=A$, $E(aba')=aE(b)a'$ for $a,a'\in A$, $b\in B$, and moreover $E$ is completely contractive and completely positive (ccp). However, thanks to Tomiyama's theorem \cite[Theorem~1.5.10]{bo}, they can be more easily described simply as contractive idempotents whose range is a C*-subalgebra. 
\par Conditional expectations have been studied in particular in relation to one of the most important constructions in operator algebras, that of crossed products of C*-algebras by groups (\cite{ad1}, \cite{ad2}, \cite{i}, \cite{ired}, \cite{mk}).
\par In addition to crossed products, there are a variety of other constructions that give rise to graded algebras over groups that are equally useful, and for which it would be nice to enjoy similar results as for crossed products. Fortunately, such algebras can often be described as sectional algebras of Fell bundles. 
\par Thus in this paper we study conditional expectations on Fell bundles. In order to describe briefly what we will do, we will use some terminology and notation that we will only introduce towards the end of this section, and to which we refer the reader.  
\par If $\ma$ is a Fell subbundle of a Fell bundle $\mb$, it is not in general true that $C^*(\ma)$ is a C*-subalgebra of $C^*(\mb)$. Some recent examples of this situation can be found in \cite{suzuki} in the realm of crossed prodcuts, and even for Fell bundles over discrete groups, as shown in \cite[Proposition~21.7]{examen}. On the other hand, there are some cases in which we know that $C^*(\ma)$ is a C*-subalgebra of $C^*(\mb)$, for instance when $\ma$ is an hereditary Fell subbundle of $\mb$ (see \cite[Theorem~4.3]{equivfb} and \cite[Corollary~5.4]{trings}). Further back in time we have the following result from Itoh in relation to crossed products \cite{i} (see also \cite{mk} for the case of discrete groups): suppose $\alpha$ is an action of the locally compact group $G$ on the C*-algebra $B$, $A$ is a $\alpha$-invariant C*-subalgebra of $B$ and $E:B\to A$ is a conditional expectation from $B$ to $A$ such that $\alpha_tE=E\alpha_t$, $\forall t\in G$. Then $A\rtimes_{\alpha|_{A}}G$ is a C*-subalgebra of $B\rtimes_\alpha G$, and $E$ can be extended to a conditional expectation $E^u:B\rtimes_\alpha G\to A\rtimes_\alpha G$. This can be translated to the language of Fell bundles as follows. Let $\mb=G\times B$ and $\ma=G\times A$ be the semi-direct product Fell bundles associated to the actions $\alpha$ and $\alpha|_{A}$ respectively, so that $\ma$ is a Fell subbundle of $\mb$. The conditional expectation $E:B\to A$ extends to a map (that we also call) $E:\mb\to\ma$ by means of $E(r,b):=(r,E(b))$, $\forall (r,b)\in\mb$. It is easy to see that this new $E$ is a conditional expectation from the Fell bundle $\mb$ to its Fell subbundle $\ma$ according to Definition~\ref{defn:condexp} below. Then Itoh's result says that, in this case, there is an inclusion $C^*(\ma)\subseteq C^*(\mb)$, and that the conditional expectation $E$ defines a conditional expectation $E^u:C^*(\mb)\to C^*(\ma)$. This is exactly the first main result we want to prove (Theorem~\ref{thm:subalgebra}), but with the only assumption that $\ma$ is a Fell subbundle of $\mb$ and the existence of a conditional expectation between them.
\par On the other hand, one can consider the same situation, but replacing the full cross-sectional C*-algebras of the Fell bundles by the reduced cross-sectional ones. In this case there is no need to worry about the inclusion of $C^*_r(\ma)$ in $C^*_r(\mb)$, which always holds (\cite[Proposition~3.2]{fa}, and \cite[Proposition~21.3]{ruybook} for a simple proof in the discrete group case), but it is worth asking whether there is also a conditional expectation $E^r:C^*_r(\mb)\to C^*_r(\ma)$ between them. Again, this situation has been already considered by Itoh in \cite{ired} and Khoshkam in \cite{mk} for actions of general locally compact groups and of discrete groups respectively, and by Exel for Fell bundles over discrete groups (\cite[Theorem~21.29]{ruybook}). We will also show (Theorem~\ref{thm:subalgebrared}) that the above-mentioned results of Itoh, Khoshkam and Exel generalize for any conditional expectation $E:\mb\to \ma$ between Fell bundles over arbitrary locally compact groups.
%defines a conditional expectation $E_r:C^*_r(\mb)\to C^*_r(\ma)$ between their reduced cross-sectional C*-algebras, thus generalizing the .  

\medskip
 \par  Although we assume that the reader is familiar with the basic definitions and facts surrounding Fell bundles over locally compact groups, to fix the notation and terminology we recall below some of the objects we will be working with. The reader is referred to the treatise \cite{fd} for complete information on this topic. 
 \par If $\mb=(B_t)_{t\in G}$ is a Fell bundle over the locally compact group $G$, we denote by $C_c(\mb)$ the vector space of compactly supported continuous sections of $\mb$. It is a locally convex topological space with the inductive limit topology determined by the family of subspaces $C_K(\mb):=\{f\in C_c(\mb):\supp(f)\subseteq K\}$, $K\subseteq G$ compact.  Besides, $C_c(\mb)$ is a $*$-algebra (``the compacted $*$-algebra of $\mb$''), with the product given by convolution: $f*g(t):=\int_Gf(s)g(s^{-1}t)ds$ (integration with respect to the Haar measure of $G$) and involution given by $f^*(t):=\Delta^{-1}f(t^{-1})^*$, where $\Delta$ is the modular function of $G$. Moreover, $\norm{f}_1 :=\int_G\norm{f(t)}dt$ is a norm on $C_c(\mb)$, with which it becomes a normed $*$-algebra. Its completion, $L^1(\mb)$, is a Banach $*$-algebra whose non-degenerate representations correspond to the non-degenerate representations of $\mb$ (integration of representations of $\mb$ gives representations of $L^1(\mb)$). Its universal enveloping C*-algebra, denoted $C^*(\mb)$, is called the full cross-sectional algebra of $\mb$. Consequently we have a bijective correspondence between non-degenerate representations of $\mb$ and non-degenerate representations of $C^*(\mb)$ (see \cite[VIII]{fd}) for the precise statements.
 \par Among the representations of $\mb$, there is one of special importance, the left regular representation. The space $C_c(\mb)$ is also a right $B_e$-module with $(fb)(t):=f(t)b$, $\forall f\in C_c(\mb)$, $b\in B_e$ and $t\in G$ (here $e$ denotes the identity of the group $G$). Moreover, the map $C_c(\mb)\times C_c(\mb)\ni(f,g)\mapsto \int_Gf(r)^*g(r)dr\in B_e$ is a pre-inner product on $C_c(\mb)$, so by completing we obtain a full right Hilbert $B_e$-module $\big(L^2(\mb),\pr{\,}{}_{L^2(\mb)}\big)$. Given $b_t\in B_t$ and $\xi\in C_c(\mb)$, we have that $\Lambda^\mb_{b_t}(\xi):G\to \mb$, given by $\Lambda^\mb_{b_t}(\xi)(r):=b_t\xi(t^{-1}r)$ is a continuous section of $\mb$, so it belongs also to $L^2(\mb)$. It can be shown that, in fact, the map $\xi\mapsto \Lambda^\mb_{b_t}\xi$ can be extended to an adjointable map in $L^2(\mb)$, and it turns out that $b\mapsto \Lambda^\mb_{b}$ is a representation of $\mb$: the left regular representation of $\mb$
The image of $C^*(\mb)$ under the (integrated form of the) representation $\Lambda^\mb$ is called the reduced cross-sectional algebra of $\mb$, and it is denoted by $C^*_r(\mb)$. Of course $C^*_r(\mb)\subseteq \mathcal{L}(L^2(\mb))$, the C*-algebra of adjointable operators on $L^2(\mb)$. When $\Lambda^\mb$ is injective, so we can identify $C^*(\mb)$ and $C^*_r(\mb)$, it is said that the Fell bundle $\mb$ is amenable.   
\medskip
\par The organization of the article is as follows. In the next section, after defining and studying some basic properties of conditional expectations, we study some maps on certain auxiliary C*-algebras $\Mb_\tb(\mb)$ associated with the bundles, which will prove helpful to prove, in the third section, our first main  result. There we extend the conditional expectation $E$ to a map $E^c:C_c(\mb)\to C_c(\ma)$ such that $E^c(f)(t)=E(f(t))$, we show that we have an inclusion $C^*(\ma)\subseteq C^*(\mb)$, and that the map $E^c$ extend to a completely positive and completely contractive map $E^r:C^*(\mb)\subseteq C^*(\ma)$. To do it we use the results of the second section, and, crucially, the correspondence between functionals of positive type and cyclic representations of a Fell bundle. Then, using that the norm of a completely positive map can be computed using approximate units, we see that $E^r$ is in fact completely contractive, and therefore a conditional expectation. As an application we show that if $\mb$ is amenable, then so is $\ma$. We also give an application concerning the C*-algebras $\Mb_\tb(\mb)$. In the last section we prove the second main result. For this we make a heavy use of a concrete way to see $C^*(\ma)$ inside $C^*(\mb)$, as well as the possibility of creating a useful Hilbert module $L^2_E$ using the conditional expectation $E$, which allows to conveniently combine the regular representations of $\ma$ and $\mb$.       

\section{Conditional expectations on Fell bundles}
 \begin{defn}\label{defn:condexp} Let $\mb=(B_t)_{t\in G}$ be a Fell bundle over the locally compact group $G$, and let
   $\ma=(A_t)_{t\in G}$ be a Fell subbundle of $\mb$. A continuous idempotent quasi-linear map 
   $E:\mb\to\ma$ is said to be a conditional expectation from $\mb$ to
   $\ma$ if $E(\mb)=\ma$, $\norm{E|_{B_e}}=1$ and $E(aba')=aE(b)a'$,
   $\forall b\in\mb$, $a,a'\in \ma$.  \par Recall that $E$ is said to be quasi-linear when $E:B_t\to A_t$ is linear for each $t\in G$.
 \end{defn}
%\par When the group $G$ is trivial we can identify the correspondinge Fell bundle with its unique fiber, which is a C*-algebra. Then we recover the definition of a conditional expectation from a C*-algebra to a C*-subalgebra as an idempotent and contractive bimodule map onto the the subalgebra. 
\par It is clear that if $E$ is such a conditional expectation, then
$E(a)=a$ $\forall a\in \ma$, and also that the restriction of $E$ to
$B_e$ is a conditional expectation from $B_e$ to $A_e$ in the sense of C*-algebras.
\par As we shall see in Corollary~\ref{cor:condexp}, the definition above implies that $\norm{E|_{B_t}}\leq 1$, $\forall t\in G$. In view of Tomiyama's theorem one may ask whether the bimodule condition is strictly necessary or can be replaced by the requirement that $E$ be contractive in each fiber, but we have not attempted to answer this question.    

In our
definition of conditional expectation we do not require condition (i)
nor the boundedness of the maps $P_g$ of
\cite[Definition~21.19]{ruybook}. However, as it will follows   
from Lemma~\ref{lem:condexp} and Corollary~\ref{cor:condexp} below,
such requirements follow 
automatically, so both definitions are actually equivalent.
\begin{exmp}[Canonical expectations]\label{exmp:canexp}
An important role in the theory of Fell bundles over discrete groups, in particular in relation to the question of amenability
(see \cite{examen}), has been played by the canonical expectation
$E:\mb\to B_e$ given by $E(b)=\begin{cases} b&\textrm{ if }b\in 
  B_e\\ 0&\textrm{ if }b\notin B_e\end{cases}$ (we look at $B_e$ as a Fell bundle over the trivial group $\{e\}$). 
\par More generally, given a subgroup $H$ of $G$, one can define
$E_H:\mb\to \mb_H$ by $E_H(b_t)=\begin{cases} b_t&\textrm{ if }t\in 
  H\\ 0&\textrm{ if }t\notin H\end{cases}$, $\forall b_t\in B_t$. 
Here $\mb_H$ is the retraction of $\mb$ to $H$.
\end{exmp}

\begin{exmp}[Inflations]\label{exmp:inflations}
Given a Fell bundle $\mb$, we can form the Fell bundle $M_n(\mb)$,
such that $M_n(\mb)_t:=M_n(B_t)$, $\forall t\in G$. Moreover, if
$E:\mb\to\ma$ is a conditional expectation from $\mb$ to $\ma$, the
inflation $E_n:M_n(\mb)\to M_n(\ma)$, such that
$E_n(b_{ij}):=(E(b_{ij}))$, is clearly a conditional expectation from
$M_n(\mb)$ to $M_n(\ma)$. Alternatively one can see $M_n(\mb)$ as $M_n\otimes \mb$ in the sense of \cite{fatens}, so $E_n$ becomes $Id\otimes E$.  
\end{exmp}

Since the restriction $E|_{B_e}:B_e\to A_e$ is a conditional
expectation, it satisfies $E(b^*)=E(b)^*$ and $E(b^*b)\geq
 E(b)^*E(b)$ $\forall b\in B_e$. In fact we can remove the condition 
 $b\in B_e$: 

 \begin{lem}\label{lem:condexp}
 Let $E:\mb\to\ma$ be a conditional expectation from $\mb$ to the Fell
 subbundle $\ma$ of $\mb$. Then $E(b^*)=E(b)^*$ and $E(b^*b)\geq
 E(b)^*E(b)$, $\forall b\in\mb$.   
 \end{lem}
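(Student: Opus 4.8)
The plan is to deduce both identities from the corresponding facts for the genuine C*-conditional expectation $E|_{B_e}\colon B_e\to A_e$, using the bimodule condition to pass between fibres. The technical heart, which I would isolate first, is the pair of \emph{one-sided} module identities
\begin{equation*}
E(ab)=aE(b),\qquad E(ba)=E(b)a\qquad(a\in\ma,\ b\in\mb).
\end{equation*}
To obtain these from the two-sided hypothesis $E(aba')=aE(b)a'$ I would insert an approximate unit $(u_i)$ of the unit fibre to play the role of the missing outer factor, relying on the standard Fell-bundle fact that such a net is a two-sided approximate unit on every fibre $B_t$; passing to the limit and using continuity of $E$ together with $E|_{\ma}=\mathrm{id}$ then removes it. I expect this step to be the main obstacle, precisely because the inserted unit must be compatible with the subbundle $\ma$ (see the final paragraph).

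Granting the one-sided identities, $E(b^*)=E(b)^*$ comes out cleanly. Fix $b\in B_t$ and $a\in A_t$, so that $a^*b$ and $b^*a=(a^*b)^*$ both lie in $B_e$. Applying the $*$-preservation of $E|_{B_e}$ to this adjoint pair and then the one-sided identities gives
\begin{equation*}
E(b^*)a=E(b^*a)=E(a^*b)^*=(a^*E(b))^*=E(b)^*a .
\end{equation*}
Hence $x:=E(b^*)-E(b)^*\in A_{t^{-1}}$ satisfies $xa=0$ for every $a\in A_t$; taking $a=x^*\in A_t$ yields $xx^*=0$, whence $\norm{x}^2=\norm{xx^*}=0$ by the C*-identity in the fibre, i.e.\ $E(b^*)=E(b)^*$.

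For the inequality I would complete the square. Since $E|_{B_e}$ is positive and $(b-E(b))^*(b-E(b))\ge 0$ in $B_e$, we have $E\big((b-E(b))^*(b-E(b))\big)\ge 0$. Expanding the product (its four terms all lie in $B_e$) and evaluating $E$ term by term with the one-sided identities, the relation $E(b^*)=E(b)^*$ just proved, and $E(a)=a$ for $a\in\ma$, the three terms other than $E(b^*b)$ each reduce to $E(b)^*E(b)$, so the whole expression collapses to $E(b^*b)-E(b)^*E(b)\ge 0$, as desired.

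The one genuinely delicate point is thus the one-sided bimodularity across fibres. Conceptually I would repackage the lemma through the linking C*-algebra $C_t=\left(\begin{smallmatrix}B_e & B_t\\ B_{t^{-1}} & B_e\end{smallmatrix}\right)$, with the C*-subalgebra built from $\ma$, on which $E$ acts entrywise as an idempotent $\tilde E$ fixing the subalgebra; both assertions are then exactly the statement that $\tilde E$ carries the positive element $\left(\begin{smallmatrix}1\\ b^*\end{smallmatrix}\right)\!\left(\begin{smallmatrix}1 & b\end{smallmatrix}\right)$ to a positive element. This reformulation does not dissolve the difficulty: positivity of $\tilde E$ is again equivalent to the one-sided identities, and one cannot short-circuit it through Tomiyama's theorem, since fibrewise contractivity $\norm{E|_{B_t}}\le 1$ is only obtained \emph{afterwards} (in Corollary~\ref{cor:condexp}) from the present lemma. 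Everything therefore hinges on making the approximate-unit argument of the first paragraph rigorous, i.e.\ on the interaction between the unit-fibre approximate unit and the subbundle $\ma$.
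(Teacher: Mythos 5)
Your proposal reproduces the paper's argument in its algebraic core: the $*$-preservation step (from $xa=0$ for all $a\in A_t$, then $a=x^*$ and the C*-identity) and the completed square are exactly the computations in the paper's proof. But the one step you explicitly leave open --- the mixed-fibre one-sided identities $E(b^*a)=E(b^*)a$ and $E(a^*b)=a^*E(b)$ for $b\in B_t$, $a\in A_t$ --- is a genuine gap in what you wrote. Your diagnosis of why the naive insertion fails is accurate: an approximate unit of $B_e$ need not lie in \ma, so the bimodule hypothesis $E(aba')=aE(b)a'$ does not apply to it, while an approximate unit of $A_e$ need not converge multiplicatively against elements of \mb. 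For the record, the paper uses these same identities silently (its first display already reads $E(b^*)a-E(b)^*a=E(b^*a)-E(a^*b)^*$), so you have isolated a real subtlety rather than invented one; still, your proof is incomplete until the identities are established, and your closing paragraph concedes as much.

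The gap can be closed, and the obstacle you fear never materializes, because every product this lemma needs lies in $B_e$, where Tomiyama's theorem applies to $E|_{B_e}$. Fix $b\in B_t$ and $a\in A_t$, so $b^*a\in B_{t^{-1}}A_t\subseteq B_e$, and let $u\in A_e$. Since $E|_{B_e}$ is an idempotent of norm one onto the C*-subalgebra $A_e$ (these are hypotheses of Definition~\ref{defn:condexp}), Tomiyama's theorem makes it a genuine conditional expectation, in particular an $A_e$-bimodule map; hence $uE(b^*a)=E\big(u(b^*a)\big)$. On the other hand $u,a\in\ma$, so the two-sided hypothesis gives $E(ub^*a)=uE(b^*)a$. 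Therefore $u\big(E(b^*a)-E(b^*)a\big)=0$ for every $u\in A_e$; since both $E(b^*a)$ and $E(b^*)a$ lie in $A_e$, letting $u$ run through an approximate unit of $A_e$ --- which only ever has to act on elements of $A_e$, so no compatibility with \mb is needed --- yields $E(b^*a)=E(b^*)a$. Symmetrically, comparing $E\big((a^*b)u\big)=E(a^*b)u$ (Tomiyama) with $E(a^*bu)=a^*E(b)u$ (two-sided hypothesis) gives $E(a^*b)=a^*E(b)$. These two identities, applied also with $a=E(b)$, are all that your second and third paragraphs consume, so with this insertion your proof is complete; neither the full one-sided bimodularity on arbitrary fibres nor the linking-algebra reformulation of your last paragraph (which, as you note, would circularly require contractivity of $E$ on the fibres) is needed.
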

\begin{proof}
Let $b\in B_t$ and $a\in A_t$. Since $a^*b\in B_e$ and the restriction
of $E$ to $B_e$ is a conditional expectation, we have
\[\big(E(b^*)-E(b)^*\big)a=E(b^*a)-E(a^*b)^*=E(b^*a)-E((a^*b)^*)=0.\] Thus 
$E(b^*)-E(b)^*=0$, for one can take $a=(E(b^*)-E(b)^*)^*$. The last 
statement follows from the first one: 
\begin{gather*}0\leq E\big((b^*-E(b)^*)(b-E(b))\big)
=E\big(b^*b-b^*E(b)-E(b)^*b+E(b)^*E(b)\big)\\
=E(b^*b)-E\big(b^*E(b)\big)-E\big(E(b)^*b\big)+E\big(E(b)^*E(b)\big)
=E(b^*b)-E(b)^*E(b). 
\end{gather*}  
\end{proof}

\begin{cor}\label{cor:condexp}
$E$ is contractive: $\norm{E(b)}\leq\norm{b}$, $\forall b\in\mb$.
\end{cor}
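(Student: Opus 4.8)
The plan is to reduce the estimate on an arbitrary fibre to the already-established contractivity of $E$ on the unit fibre $B_e$, using the C*-identity of the Fell bundle together with the inequality $E(b^*b)\geq E(b)^*E(b)$ supplied by Lemma~\ref{lem:condexp}. First I would fix $b\in B_t$ for some $t\in G$. Since $E$ maps $B_t$ into $A_t\subseteq B_t$, the element $E(b)^*E(b)$ lies in $A_e\subseteq B_e$, and the C*-property of the fibre norm gives $\norm{E(b)}^2=\norm{E(b)^*E(b)}$; likewise $\norm{b}^2=\norm{b^*b}$, with $b^*b\in B_e$.

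Next I would invoke Lemma~\ref{lem:condexp}, which yields $0\leq E(b)^*E(b)\leq E(b^*b)$ as positive elements of the C*-algebra $B_e$. Because the norm is monotone on the positive cone of a C*-algebra, this gives $\norm{E(b)^*E(b)}\leq\norm{E(b^*b)}$. Finally, since $b^*b\in B_e$ and the restriction $E|_{B_e}$ is a conditional expectation from $B_e$ to $A_e$ in the C*-algebraic sense---hence contractive, as recorded by $\norm{E|_{B_e}}=1$ (equivalently, by Tomiyama's theorem)---we obtain $\norm{E(b^*b)}\leq\norm{b^*b}$. Chaining these inequalities,
\[\norm{E(b)}^2=\norm{E(b)^*E(b)}\leq\norm{E(b^*b)}\leq\norm{b^*b}=\norm{b}^2,\]
and taking square roots completes the argument.

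I do not expect a serious obstacle here, since the real work has already been absorbed into Lemma~\ref{lem:condexp}. The only points deserving care are the use of the defining C*-axiom of a Fell bundle, $\norm{b}^2=\norm{b^*b}$, and the passage from the operator inequality of the lemma to a norm inequality, which is precisely the monotonicity of the norm on positive elements. Everything else is just the hypothesised contractivity of $E$ on the unit fibre $B_e$.
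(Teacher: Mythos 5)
Your proposal is correct and follows exactly the same route as the paper: apply Lemma~\ref{lem:condexp} to get $0\leq E(b)^*E(b)\leq E(b^*b)$, use the C*-norm identity and monotonicity of the norm on positive elements, and finish with the contractivity of $E|_{B_e}$ guaranteed by $\norm{E|_{B_e}}=1$. The only difference is that you spell out the intermediate justifications (positivity, norm monotonicity, the C*-axiom of the bundle) which the paper leaves implicit.
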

\begin{proof}
Since $0\leq E(b)^*E(b)\leq E(b^*b)$, we have
$\norm{E(b)}^2=\norm{E(b)^*E(b)}\leq \norm{E(b^*b)}\leq \norm{b}^2$,
where the latter inequality follows from the fact that $E|_{B_e}$ is contractive by definition. 
\end{proof}

\par We turn to prove our first main result. For this we will use some auxiliary algebras that are linked to a Fell bundle, and that have already proved their usefulness in other occasions (see for instance \cite{equivfb}).   
\par Given $\tb=(t_1,\ldots,t_n)\in G^n$, define   
\[\Mb_\tb(\mb):=\{ M\in
M_n(\mb)\colon M_{i,j}\in B_{t_i{t_j}^{-1}}\ \forall\
i,j=1,\ldots,n\},\]
\[\Nb_\tb(\mb):=\{ N\in
M_n(\mb)\colon N_{i,j}\in B_{t_i}B_{t_j^{-1}}\ \forall\
i,j=1,\ldots,n\},\]

and 
\[Y_\tb:=B_{t_1}\oplus\cdots \oplus B_{t_n},\]
where the latter is a direct sum of right Hilbert $B_e$-modules. 
\par It is easy to see that, with the usual linear operations, product and
involution of matrices, $\Mb_\tb(\mb)$ is a $*$-algebra, and that
$Y_{\tb }$ is a left $\Mb_\tb(\mb)$-module (we think of the elements of $Y_\tb$ as column matrices). For instance, if $M,N\in\Mb_\tb(\mb)$, then
\[(MN)_{i,j}
=\sum_{k}M_{i,k}N_{k,j}
\in\sum_kB_{t_it_k^{-1}}B_{t_kt_j^{-1}}
\subseteq B_{t_it_j^{-1}},\] and a similar computation shows that
$\Mb_\tb(\mb)Y_{\tb}\subseteq Y_{\tb}$. Besides, $\Nb_\tb(\mb)$ is clearly a closed $*$-ideal of $\Mb_\tb(\mb)$.
Note that, in fact, we have:

\begin{gather}\label{eqn:ytbim}
  \mathcal{K}(Y_\tb)=\mathcal{K}(\oplus_{i=1}^nB_{t_i})=(\mathcal{K}(B_{t_i},B_{t_j})_{i,j=1}^n=(B_{t_i}B_{t_j}^*)_{i,j=1}^n=\Nb_\tb(\mb).    
\end{gather}
On the other hand $Y_\tb$ is a full right Hilbert $J_\tb(\mb)$-module over the ideal $J_\tb(\mb)$ of $B_e$, where $J_\tb(\mb):=\sum_{i=1}^nB_{t_i}^*B_{t_i}$. Therefore $Y_\tb$ is an equivalence bimodule between $\Nb_\tb(\mb)$ and $J_\tb(\mb)$. We denote by $\pr{\,}{}_l$ and  $\pr{\,}{}_r$ the corresponding left and right inner products of $Y_\tb$.    
\par For $\tb=(t_1,\ldots,t_n)\in G^n$ and $r\in G$, we put $\tb r=(t_1r,\ldots,t_nr)\in G^n$. Note that $\Mb_{\tb r}(\mb)=\Mb_{\tb
}(\mb)$, $\forall r\in G$, and  that $\Mb_\tb(\mb)Y_{\tb r}\subseteq Y_{\tb r}$, and $\Nb_{\tb r}(\mb)=(B_{t_ir}B_{t_j r}^*)_{i,j=1}^n$ is also an ideal in $\Mb_\tb(\mb)$. 

\begin{prop}\label{prop:compactYbim}
  Let $\tb=(t_1,\ldots,t_n)\in G^n$. Then
  \[\Mb_\tb(\mb)=\textrm{span}\{\Nb_{\tb t_k^{-1}}(\mb):k=1,\ldots,n\}
=\textrm{span}\{\Nb_{\tb r^{-1}}(\mb):r\in G\}
    .\] 
\end{prop}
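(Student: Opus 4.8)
The plan is to prove the two inclusions separately, the inclusion $\supseteq$ being immediate and the inclusion $\subseteq$ carrying the real content. For the easy direction I would first observe that for \emph{every} $r\in G$ one has $\Nb_{\tb r^{-1}}(\mb)\subseteq\Mb_\tb(\mb)$: the $(i,j)$ entry of an element of $\Nb_{\tb r^{-1}}(\mb)$ lies in $B_{t_ir^{-1}}B_{(t_jr^{-1})^{-1}}=B_{t_ir^{-1}}B_{rt_j^{-1}}\subseteq B_{t_ir^{-1}rt_j^{-1}}=B_{t_it_j^{-1}}$, which is exactly the fiber prescribed by $\Mb_\tb(\mb)$. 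Consequently both $\gen\{\Nb_{\tb t_k^{-1}}(\mb):k=1,\ldots,n\}$ and $\gen\{\Nb_{\tb r^{-1}}(\mb):r\in G\}$ are contained in $\Mb_\tb(\mb)$.

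For the substantive inclusion $\Mb_\tb(\mb)\subseteq\gen\{\Nb_{\tb t_k^{-1}}(\mb):k=1,\ldots,n\}$ I would reduce to single-entry matrices. Since any $M\in\Mb_\tb(\mb)$ is the finite sum of the matrices obtained by keeping one entry $M_{i,j}\in B_{t_it_j^{-1}}$ and zeroing out the rest, it suffices to show that for each pair $(i,j)$ and each $b\in B_{t_it_j^{-1}}$ the matrix $b\,\varepsilon_{ij}$ (with $b$ in position $(i,j)$ and $0$ elsewhere) belongs to some $\Nb_{\tb t_k^{-1}}(\mb)$. The key observation is to choose $k=j$: the $(i,j)$ entry of an element of $\Nb_{\tb t_j^{-1}}(\mb)$ is then allowed to lie in $B_{t_it_j^{-1}}B_{t_jt_j^{-1}}=B_{t_it_j^{-1}}B_e$, while every other entry may be taken to be $0$ (which lies in any of the prescribed subspaces). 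Hence $b\,\varepsilon_{ij}\in\Nb_{\tb t_j^{-1}}(\mb)$ provided $b\in B_{t_it_j^{-1}}B_e$.

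The one point that needs justification --- and the only real obstacle --- is precisely this factorization: I must check that $B_{t_it_j^{-1}}B_e=B_{t_it_j^{-1}}$ (or at least that its closure is, according to how one reads $\gen$). This is the non-degeneracy of the right $B_e$-action on the fiber: viewing $B_s$ as a right Hilbert $B_e$-module via $\pr{a}{b}=a^*b$, one has $\overline{B_sB_e}=B_s$, either by approximating $b=\lim_\lambda bu_\lambda$ along an approximate unit $(u_\lambda)$ of $B_e$, or, if the literal set equality is wanted for an algebraic span, by the Cohen--Hewitt factorization theorem. Granting this, $b\,\varepsilon_{ij}\in\Nb_{\tb t_j^{-1}}(\mb)$ for every $b\in B_{t_it_j^{-1}}$, and therefore $\Mb_\tb(\mb)\subseteq\sum_{k=1}^n\Nb_{\tb t_k^{-1}}(\mb)$.

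Finally I would assemble the chain of inclusions. Combining the last display with the easy direction gives $\gen\{\Nb_{\tb t_k^{-1}}(\mb):k=1,\ldots,n\}=\Mb_\tb(\mb)$, which is the first asserted equality. Since $\{\Nb_{\tb t_k^{-1}}(\mb):k=1,\ldots,n\}\subseteq\{\Nb_{\tb r^{-1}}(\mb):r\in G\}$, we get $\Mb_\tb(\mb)\subseteq\gen\{\Nb_{\tb r^{-1}}(\mb):r\in G\}$; but this larger span is itself contained in $\Mb_\tb(\mb)$ by the easy direction, so all three sets coincide, establishing the second equality as well. I expect no hidden difficulty beyond the factorization step, which is a standard structural feature of Fell bundles.
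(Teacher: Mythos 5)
Your proof is correct and takes essentially the same route as the paper: both get the easy inclusions from the fiber computation $B_{t_ir^{-1}}B_{rt_j^{-1}}\subseteq B_{t_it_j^{-1}}$, and both get the hard inclusion by noting that the $(i,j)$ entry of $\Nb_{\tb t_j^{-1}}(\mb)$ is $B_{t_it_j^{-1}}B_e$, which contains $B_{t_it_j^{-1}}$ (your choice $k=j$ is exactly the paper's choice $m=j$). The only difference is that you explicitly justify, via approximate units/Cohen--Hewitt, the nondegeneracy fact that the paper dismisses as ``clear''.
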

\begin{proof}
  The preceding comments imply $\textrm{span}\{\Nb_{\tb t_k^{-1}}(\mb):k=1,\ldots,n\}\subseteq \textrm{span}\{\Nb_{\tb r^{-1}}(\mb):r\in G\}\subseteq \Mb_\tb(\mb)$. On the other hand, by \eqref{eqn:ytbim} the $(i,j)$ component of $\textrm{span}\{\Nb_{\tb t_k^{-1}}(\mb):j=1,\ldots,n\}$ is the set $\sum_{m=1}^nB_{t_it_m^{-1}}B_{t_j t_m^{-1}}^*$, which clearly contains $B_{t_it_j^{-1}}$, the $(i,j)$ component of $\Mb_\tb(\mb)$. This shows that $\Mb_\tb(\mb)\subseteq\textrm{span}\{\Nb_{\tb t_k^{-1}}(\mb):k=1,\ldots,n\}$, which ends the proof. 
\end{proof}

\begin{cor}\label{cor:Mpos}
Let $\tb=(t_1,\ldots,t_n)\in G^n$, and $M\in \Mb_\tb(\mb)$. Then $M$
is a positive element of the C*-algebra $\Mb_\tb(\mb)$ if and only if 
$\forall k=1,\ldots,n$ and $\forall y\in Y_{\tb t_k^{-1}}$ we have
$\pr{My}{y}_r\geq 0$. More explicitly $M=(M_{ij})\in \Mb_\tb(\mb)^+\iff \sum_{i,j=1}^ny_i^*M_{ij}y_j\in B_e^+$ $\forall (y_1,\ldots,y_n)\in Y_{\tb t_k^{-1}}$ and $k=1,\ldots,n$.    
\end{cor}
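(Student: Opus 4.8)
The two conditions in the statement are equivalent, so it suffices to treat the first one. The plan is to represent $\Mb_\tb(\mb)$ faithfully by its left actions on the family of Hilbert modules $Y_{\tb t_k^{-1}}$ and to read off positivity of each acting operator from the module inner product. To see the equivalence of the two formulations, expand the right inner product of $Y_{\tb t_k^{-1}}$: for $y=(y_1,\dots,y_n)$ one has $\sum_{i,j}y_i^*M_{ij}y_j=\pr{y}{My}_r=\pr{My}{y}_r^*$, and an element of a C*-algebra is positive precisely when its adjoint is; hence the condition $\sum_{i,j}y_i^*M_{ij}y_j\geq 0$ for all such $y$ is the same as $\pr{My}{y}_r\geq 0$ for all such $y$.

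Next I would set up the representations. Since $\Mb_{\tb t_k^{-1}}(\mb)=\Mb_\tb(\mb)$ and $\Mb_\tb(\mb)Y_{\tb t_k^{-1}}\subseteq Y_{\tb t_k^{-1}}$, left multiplication gives a $*$-homomorphism $\pi_k\colon\Mb_\tb(\mb)\to\mathcal{L}(Y_{\tb t_k^{-1}})$ for each $k$. The forward implication is then immediate: if $M\geq 0$ in $\Mb_\tb(\mb)$, then $\pi_k(M)=\pi_k(M^{1/2})^*\pi_k(M^{1/2})$, so $\pr{My}{y}_r=\pr{\pi_k(M^{1/2})y}{\pi_k(M^{1/2})y}_r\geq 0$ for all $y\in Y_{\tb t_k^{-1}}$ and all $k$.

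For the converse I would invoke the standard Hilbert-module positivity criterion: an adjointable operator $T$ on a right Hilbert module is positive in $\mathcal{L}(\cdot)$ iff $\pr{Ty}{y}_r\geq 0$ for all $y$ (self-adjointness of $T$ follows by polarization from the self-adjointness of the values $\pr{Ty}{y}_r$, and a nonzero negative part is excluded by testing on $y=(T_-)^{1/2}z$). Applied to $T=\pi_k(M)$, the hypothesis becomes $\pi_k(M)\geq 0$ for every $k$. It then remains to show that $\Pi:=\bigoplus_{k=1}^n\pi_k$ is faithful; for then $\Pi$ is injective, hence isometric and positivity-reflecting, so $\pi_k(M)\geq 0$ for all $k$ forces $M\geq 0$.

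Establishing this faithfulness is the main obstacle, and it is exactly where Proposition~\ref{prop:compactYbim} enters. Suppose $\pi_k(M)=0$ for all $k$. Fix $l$ and $N\in\Nb_{\tb t_l^{-1}}(\mb)$; then $MN\in\Nb_{\tb t_l^{-1}}(\mb)=\mathcal{K}(Y_{\tb t_l^{-1}})$ by \eqref{eqn:ytbim}, while $\pi_l(MN)=\pi_l(M)\pi_l(N)=0$. Since $\pi_l$ restricts on the ideal $\Nb_{\tb t_l^{-1}}(\mb)$ to the tautological (faithful) action of $\mathcal{K}(Y_{\tb t_l^{-1}})$ on $Y_{\tb t_l^{-1}}$, we get $MN=0$, so $M\,\Nb_{\tb t_l^{-1}}(\mb)=0$ for every $l$. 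By Proposition~\ref{prop:compactYbim} the ideals $\Nb_{\tb t_l^{-1}}(\mb)$ span $\Mb_\tb(\mb)$, whence $M\,\Mb_\tb(\mb)=0$; taking the factor $M^*\in\Mb_\tb(\mb)$ gives $MM^*=0$ and thus $M=0$. I note that no single module $Y_{\tb t_k^{-1}}$ need represent $\Mb_\tb(\mb)$ faithfully, so the use of the whole family, and hence of Proposition~\ref{prop:compactYbim}, is essential.
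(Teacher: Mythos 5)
Your proposal is correct and follows essentially the same route as the paper: represent $\Mb_\tb(\mb)$ by left multiplication on the modules $Y_{\tb t_k^{-1}}$, apply the Hilbert-module positivity criterion (Lance's Lemma~4.1) to get positivity of each $\pi_k(M)$, and prove faithfulness of the direct-sum representation by combining faithfulness on the ideals $\Nb_{\tb t_k^{-1}}(\mb)\cong\mathcal{K}(Y_{\tb t_k^{-1}})$ with Proposition~\ref{prop:compactYbim}. Your only additions are the explicit adjoint-based reconciliation of the two formulations and the use of $M^{1/2}$ in the easy direction, both of which the paper leaves implicit.
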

\begin{proof}
  It is clear that the positivity of $M$ implies that $\pr{My}{y}_r\geq 0$ $\forall y\in Y_{\tb t_k^{-1}}$ and $\forall k=1,\ldots,n$. To prove the converse, suppose that $M$ satisfies this latter condition.    
Since $Y_{\tb t_k^{-1}}$ is a Hilbert $(\Mb_\tb(\mb)-B_e)$-bimodule, we have
a homomorphism $\mu_{t_k^{-1}}:\Mb_\tb(\mb)\to\mathcal{L}(Y_{\tb {t_k^{-1}}})$, given by $\mu(N)y:=Ny$. Note that $\mu$ is injective when restricted to each $\Nb_{\tb t_k^{-1}}(\mb)$, and in fact above we identified $\Nb_{\tb t_k^{-1}}(\mb)$ with $\mathcal{K}(Y_{\tb t_k^{-1}})$ under this homomorphism. According to \cite[Lemma~4.1]{l}, our assumption implies that $\mu_{t_k^{-1}}(M)$ is a positive element of $\mathcal{L}(Y_{\tb {t_k^{-1}}})$, $\forall k=1,\ldots n$.  
Define $\mu:\Mb_\tb(\mb)\to\oplus_{k=1}^n\mathcal{L}(Y_{\tb t_k^{-1}})$ by $\mu(N):=(\mu_{t_k^{-1}}(N))_{k=1,\ldots,n}$, $\forall N\in \Mb_\tb(\mb)$. Then $\mu(M)$ is positive in the C*-algebra $\oplus_{k=1}^n\mathcal{L}(Y_{\tb t_k^{-1}})$, and to see that $M$ is positive it is enough to prove that $\mu$ is injective. 
So let $N\in\ker\mu$. Since $\Nb_{\tb t_k^{-1}}(\mb)$ is an ideal in $\Mb_{\tb }(\mb)$, we have $N \Nb_{\tb t_k^{-1}}(\mb)\subseteq \Nb_{\tb t_k^{-1}}(\mb)$. But $\mu\big(N \Nb_{\tb t_k^{-1}}(\mb)\big)=\mu(N)\mu(\Nb_{\tb t_k^{-1}}(\mb))=0$ and $\mu$ is faithful on $\Nb_{\tb t_k^{-1}}(\mb)$, which implies $N \Nb_{\tb t_k^{-1}}(\mb)=0$, $\forall k=1,\ldots,n$. Then, by Proposition~\ref{prop:compactYbim}, we conclude that $N\Mb_\tb(\mb)=\sum_{k=1}^nN\Nb_{\tb t_k^{-1}}(\mb)=0$. Hence $N=0$, which ends the proof.
\end{proof}

\begin{prop}\label{prop:condexppos}
Let $E:\mb\to\ma$ be a conditional expectation from $\mb$ to $\ma$,
and $\tb\in G^n$. Let $E_\tb:\Mb_\tb(\mb)\to \Mb_\tb(\ma)$ be such
that $E_\tb(M):=(E(M_{ij}))$. Then $E_\tb$ is an idempotent and
positive $\Mb_\tb(\ma)$-bimodule map, whose image is
$\Mb_\tb(\ma)$.    
\end{prop}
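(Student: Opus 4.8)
The plan is to verify the four asserted properties in turn, noting first that $E_\tb$ is well defined because $E$ maps each fiber $B_{t_it_j^{-1}}$ linearly into $A_{t_it_j^{-1}}$, so $E_\tb(M)=(E(M_{ij}))$ indeed lies in $\Mb_\tb(\ma)$ whenever $M\in\Mb_\tb(\mb)$. Three of the four claims are then essentially formal. Since $E(a)=a$ for all $a\in\ma$ (noted right after Definition~\ref{defn:condexp}) and $E_\tb(M)$ already lies in $\Mb_\tb(\ma)$, the map $E_\tb$ restricts to the identity on $\Mb_\tb(\ma)$; this simultaneously yields idempotency and that the image is exactly $\Mb_\tb(\ma)$. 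For the bimodule property I would take $A,B\in\Mb_\tb(\ma)$ and $M\in\Mb_\tb(\mb)$ and examine the $(i,j)$-entry $(AMB)_{ij}=\sum_{k,l}A_{ik}M_{kl}B_{lj}$, a sum all of whose terms lie in the single fiber $B_{t_it_j^{-1}}$. Using linearity of $E$ on that fiber together with the bimodule identity $E(aba')=aE(b)a'$ applied entrywise (with $a=A_{ik}$ and $a'=B_{lj}$ in $\ma$), each term obeys $E(A_{ik}M_{kl}B_{lj})=A_{ik}E(M_{kl})B_{lj}$, and summing gives $E_\tb(AMB)=AE_\tb(M)B$.

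The substantive point is positivity, and here I would lean on Corollary~\ref{cor:Mpos}. Given $M\in\Mb_\tb(\mb)^+$, the goal of showing $E_\tb(M)\in\Mb_\tb(\ma)^+$ reduces, by that corollary applied to $\ma$, to checking that $\sum_{i,j}a_i^*E(M_{ij})a_j\in A_e^+$ for every $k$ and every tuple $(a_1,\ldots,a_n)$ in the module $A_{t_1t_k^{-1}}\oplus\cdots\oplus A_{t_nt_k^{-1}}$ formed from $\ma$. The key observation is that, $\ma$ being a subbundle, each $a_j$ also lies in $B_{t_jt_k^{-1}}$, so the same tuple is a legitimate vector in $Y_{\tb t_k^{-1}}$ for $\mb$. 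Applying the bimodule identity entrywise and the linearity of $E$ on $B_e$ (all products $a_i^*M_{ij}a_j$ lie in $B_e$) then yields
\[\sum_{i,j}a_i^*E(M_{ij})a_j=\sum_{i,j}E(a_i^*M_{ij}a_j)=E\Big(\sum_{i,j}a_i^*M_{ij}a_j\Big).\]
Now Corollary~\ref{cor:Mpos} applied to $M\in\Mb_\tb(\mb)^+$ gives $\sum_{i,j}a_i^*M_{ij}a_j\in B_e^+$, and since $E|_{B_e}$ is a conditional expectation of C*-algebras it is positive, so its value on this element lies in $A_e^+$. This is precisely the inequality required, and a second invocation of Corollary~\ref{cor:Mpos}, this time for $\ma$, delivers $E_\tb(M)\in\Mb_\tb(\ma)^+$.

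The only genuine obstacle is positivity, and it is overcome exactly by Corollary~\ref{cor:Mpos}: positivity in these twisted matrix $*$-algebras is not an entrywise condition, so no direct argument is available, but the corollary reduces it to positivity of the quadratic forms $\sum_{i,j}y_i^*M_{ij}y_j$ over the equivalence bimodules $Y_{\tb t_k^{-1}}$. This form of the criterion transfers cleanly from $\mb$ to $\ma$ because the testing vectors for $\ma$ form a subset of those for $\mb$, while $E|_{B_e}$ maps the resulting positive elements of $B_e$ into $A_e^+$; combined with the formal verifications above, this establishes that $E_\tb$ is an idempotent, positive $\Mb_\tb(\ma)$-bimodule map with image $\Mb_\tb(\ma)$.
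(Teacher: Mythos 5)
Your proof is correct, and on the one substantive point (positivity) it takes a genuinely different route from the paper's. The paper never tests a general positive $M$ against vectors: it first reduces, via Proposition~\ref{prop:compactYbim} together with the fact that the positive cone of a sum of ideals is the sum of their positive cones, to positive elements of the ideals $\Nb_{\tb t_k^{-1}}(\mb)\cong\mathcal{K}(Y_{\tb t_k^{-1}})$, i.e.\ to (limits of) finite sums of elements $\pr{m}{m}_l=(m_im_j^*)$; for such an element it applies the Schwarz-type inequality $E(x)^*E(x)\leq E(x^*x)$ of Lemma~\ref{lem:condexp} to $x=m_1^*y_1+\cdots+m_n^*y_n$, where $y$ is a testing vector built from $\ma$, and then invokes the hard direction of Corollary~\ref{cor:Mpos} (applied to $\ma$) to get the stronger matrix inequality $E_\tb(\pr{m}{m}_l)\geq\pr{\dot{E}_n(m)}{\dot{E}_n(m)}_l\geq 0$. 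You instead take an arbitrary $M\in\Mb_\tb(\mb)^+$ and use Corollary~\ref{cor:Mpos} in both directions: the easy direction for $\mb$ (with testing vectors taken from $\ma$, which is legitimate precisely because $A_{t_jt_k^{-1}}\subseteq B_{t_jt_k^{-1}}$) gives $\sum_{i,j}a_i^*M_{ij}a_j\in B_e^+$, the bimodule identity plus positivity of $E|_{B_e}$ transfer this to $\sum_{i,j}a_i^*E(M_{ij})a_j\in A_e^+$, and the hard direction for $\ma$ concludes. Your argument is shorter and more self-contained: it needs neither the ideal-cone decomposition of Proposition~\ref{prop:compactYbim} (so no approximation of general positive elements by sums of $\pr{m}{m}_l$, hence no implicit appeal to boundedness of $E_\tb$), nor the full strength of Lemma~\ref{lem:condexp} --- only positivity of $E$ on $B_e$, which is available either from that lemma or from the remark that $E|_{B_e}$ is a C*-algebraic conditional expectation. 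What the paper's longer route buys is the intermediate Kadison--Schwarz-type inequality $E_\tb(\pr{m}{m}_l)\geq\pr{\dot{E}_n(m)}{\dot{E}_n(m)}_l$ in $\Mb_\tb(\ma)$, which is strictly more than positivity; your proof does not yield it, but the proposition as stated does not require it. The remaining formal verifications (well-definedness, idempotency, image, bimodule property) agree with the paper's, which dismisses them as routine.
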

\begin{proof}
It is clear that $E_\tb^2=E_\tb$ and
$E_\tb(\Mb_\tb(\mb))=\Mb_\tb(\ma)$, and it is easy to check that
$E_\tb(N_1MN_2)=N_1E_\tb(M)N_2$ $\forall N_1,N_2\in \Mb_\tb(\ma)$,
$M\in \Mb_\tb(\mb)$. Let us show that $E_\tb$ is positive. Let $u,v\in G$, $m=(m_1,\ldots,m_n)\in Y_{\tb
  u}(\mb)$ and $y=(y_1,\ldots,y_n)\in Y_{\tb v}(\ma)$, and let 
$M=\pr{m}{m}_l=(m_im_j^*)$, so $M\in \mathcal{K}(Y_{\tb
  u})^+\subseteq\Mb_\tb(\mb)^+$. Then 
$x:=m_1^*y_1+\cdots+m_n^*y_n\in B_{u^{-1}v}$. Since $E(y_j)=y_j$ and 
$E(m_j^*)=E(m_j)^*$ $\forall j$, we have  
\begin{gather*}\label{eqn:pos1}
E(x)^*E(x)
=\sum_{i,j=1}^ny_j^*E(m_j)E(m_i)^*y_i
=\pr{\big(E(m_i)E(m_j)^*\big)y}{y}_r\\
=\pr{\pr{\dot{E}_n(m)}{\dot{E}_n(m)}_ly}{y}_r, 
\end{gather*}    
where $\dot{E}_n:\mb^n\to\ma^n$ is such that
$\dot{E}_n(b_1,\ldots,b_n)=(E (b_1),\ldots,E (b_n))$. On the other hand: 
\begin{equation*}\label{eqn:pos2}
E(x^*x)
=\sum_{i,j=1}^ny_j^*E(m_jm_i^*)y_i
=\pr{(E(m_im_j^*))y}{y}_r
=\pr{E_\tb(\pr{m}{m}_l)y}{y}_r
\end{equation*}
Now, $0\leq E(x)^*E(x)\leq E(x^*x)$ by
Lemma~\ref{lem:condexp}, which in view of the computations above, %\eqref{eqn:pos1} and\eqref{eqn:pos2},
together with Corollary~\ref{cor:Mpos}, 
yields $E_\tb(\pr{m}{m}_l)-\pr{\dot{E}_n(m)}{\dot{E}_n(m)}_l\in
\Mb_\tb(\ma)^+$. In particular we have that
$E_\tb(\pr{m}{m}_l)\in\Mb_\tb(\ma)^+$. Thus
$E_\tb(\sum_{k=1}^n\pr{m^{(k)}}{m^{(k)}}_l)\in \Mb_\tb(\ma)^+\subseteq
\Mb_\tb(\mb)^+$, $\forall m^{(1)},\ldots,m^{(n)}\in\cup_{u\in G}Y_{\tb u}$. It
follows that $E_\tb$ is 
positive, by Proposition~\ref{prop:compactYbim} and the well known fact
that the positive cone of a sum of ideals agrees with the sum of the
positive cones of these ideals.\end{proof}

\par As an application of the main result of the next section,
Theorem~\ref{thm:subalgebra}, we will see later, in
Proposition~\ref{prop:Etis condexp}, that $E_\tb$ is in fact a
conditional expectation from $\Mb_\tb(\mb)$ to $\Mb_\tb(\ma)$ .

\section{Conditional expectation on the full C*-algebra} 
\par If $E$ is a continuous conditional expectation from $\mb$ to
$\ma$, and $f\in C_c(\mb)$, then $E^c(f)$, defined to be
$E^c(f)(t):=E(f(t))$, is obviously an element of $C_c(\ma)$. This defines a map $E^c:C_c(\mb)\to C_c(\ma)$ which is idempotent with image $C_c(\ma)$, and also a 
$C_c(\ma)$-homomorphism of $C_c(\ma)$-bimodules.    
%\textcolor{magenta}{We also let $\tilde{E}:C_c(\mb)\to C_c(\mb)$ be the
%composition of $E^c$ with the inclusion $C_c(\ma)\inc C_c(\mb)$, so
%$\tilde{E}(f)=E^c(f)$, $\forall f\in C_c(\mb)$.} 
\par If $C^*_\diamond(\mb)$ is a C*-completion of $C_c(\mb)$ and it
turns out that $E^c$ is continuous in the norm of $C^*_\diamond(\mb)$,
then we refer to the continuous extension of $E^c$ as the unique
extension of $E$ to $C^*_\diamond(\mb)$. In what follows $\iota_K^\mb:C_K(\mb)\to C_c(\mb)$ indicates the natural inclusion.   

\begin{prop}\label{prop:ilt}
Let $E:\mb\to\ma$ be a continuous conditional expectation from $\mb$
to $\ma$, and let $E^c$ %and \textcolor{magenta}{$\tilde{E}}$
as defined previously. Then:
\begin{enumerate}
 \item $E^c$ is a surjective and contractive map in the
   uniform norms of $C_c(\mb)$ and $C_c(\ma)$. Moreover 
   $E^c(C_K(\mb))=C_K(\ma)$, for each compact subset $K$ of $G$.  
 \item $E^c:C_c(\mb)\to C_c(\ma)$ is continuous in the inductive
   limit topologies of these spaces. 
\end{enumerate} 
\end{prop}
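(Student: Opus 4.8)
The plan is to dispatch statement (1) by reducing everything to fiberwise estimates, and then to obtain statement (2) formally from (1) via the universal property of the inductive limit topology.

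First I would prove contractivity in the uniform norms. Since $E^c(f)(t)=E(f(t))$ and $\norm{E(f(t))}\leq\norm{f(t)}$ by Corollary~\ref{cor:condexp}, taking the supremum over $t\in G$ gives $\norm{E^c(f)}_\infty\leq\norm{f}_\infty$. For surjectivity I would invoke the already-noted fact that $E(a)=a$ for every $a\in\ma$: regarding $g\in C_c(\ma)$ as a section of $\mb$, we have $E^c(g)(t)=E(g(t))=g(t)$, so $E^c$ fixes $C_c(\ma)$ pointwise and is in particular surjective (this is merely the idempotency of $E^c$ recorded before the statement). For the support claim, quasi-linearity gives $E(0)=0$, whence $E^c(f)(t)=0$ wherever $f(t)=0$; thus $\supp(E^c(f))\subseteq\supp(f)$, which yields $E^c(C_K(\mb))\subseteq C_K(\ma)$. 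Combined with $E^c|_{C_K(\ma)}=\mathrm{id}$ and $C_K(\ma)\subseteq C_K(\mb)$, this gives the equality $E^c(C_K(\mb))=C_K(\ma)$.

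For statement (2) I would appeal to the universal property defining the inductive limit topology: a linear map out of $C_c(\mb)$ is continuous precisely when, for every compact $K\subseteq G$, its restriction to $C_K(\mb)$ (equipped with the uniform norm) is continuous. By the support estimate just proved, $E^c$ carries $C_K(\mb)$ into $C_K(\ma)$, and there it is contractive in the uniform norm by part~(1). Since the inclusion $\iota_K^\ma:C_K(\ma)\to C_c(\ma)$ is continuous by the very definition of the inductive limit topology on $C_c(\ma)$, the composite $E^c\circ\iota_K^\mb=\iota_K^\ma\circ(E^c|_{C_K(\mb)})$ is continuous from $C_K(\mb)$ into $C_c(\ma)$ for each $K$. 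Hence $E^c$ is continuous for the inductive limit topologies.

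There is no serious obstacle here; the only points requiring a little care are the verification that $E^c(f)$ is again a \emph{continuous} compactly supported section (which follows from the continuity of $E$ composed with that of $f$, as observed before the statement) and the correct deployment of the universal property, so that part~(2) reduces formally to part~(1) together with the support-shrinking property $\supp(E^c f)\subseteq\supp(f)$.
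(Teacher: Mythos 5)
Your proof is correct and takes essentially the same route as the paper: part (1) is obtained fiberwise from Corollary~\ref{cor:condexp} together with the idempotency remarks preceding the proposition, and part (2) uses the identical factorization $E^c\circ\iota_K^\mb=\iota_K^\ma\circ E^c|_{C_K(\mb)}$ combined with the universal property of the inductive limit topology. Your explicit support-shrinking argument ($E(0)=0$ by quasi-linearity, hence $\supp(E^c f)\subseteq\supp(f)$) merely spells out a detail the paper subsumes under ``the comments preceding the proposition.''
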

\begin{proof}
By recalling, from
Corollary~\ref{cor:condexp}, that $E$ is contractive, and observing that
$C_c(\ma)\subseteq C_c(\mb)$, the first statement follows at once
from the comments preceding the proposition. To prove the second
statement is enough to see that 
$E^c\circ\iota_K^\mb:(C_K(\mb),\norm{\
}_\infty)\to(C_c(\ma),\tau_\ma)$ is continuous, for each compact
subset $K$ of $G$. But
$E^c\circ\iota_K^\mb=\iota_K^\ma\circ E^c|_{C_K(\mb)}$, 
and $E^c|_{C_K(\mb)}$ is continuous in the uniform topologies by (1). Since $\iota_K^\ma:(C_K(\ma),\norm{\ }_\infty)\to
(C_c(\ma),\tau_\ma)$ is continuous, we are done.              
\end{proof}

\par A crucial ingredient in the proof of our first main result is the
correspondence between cyclic representations and linear functionals
of positive type of a Fell bundle. For the convenience of the reader,
and in 
order to establish some notation, we recall the basic facts about
this theory developed by Fell in \cite{fdinduced} (see also
\cite[VIII.20-21]{fd}).  
\par A functional of positive type on a Fell bundle $\mb$ is a
continuous quasi-linear map $\varphi:\mb\to\C$ such that for any
finite sequence of elements 
$b_1,\ldots,b_n$ in $\mb$ we have $\sum_{i,j=1}^n\varphi(b_i^*b_j)\geq 0$. For
example, if $\pi:\mb\to B(H)$ is a representation of $\mb$ and $\xi\in
H$, the functional $\varphi_\xi:\mb\to\C$, given by
$\varphi_\xi(b)=\pr{\pi(b)\xi}{\xi}$, is a functional of positive
type. In fact, any functional of positive type can be represented in
such a way, and if one restricts to cyclic representations, then the
pair $(\pi,\xi)$ is unique up to unitary equivalence. It follows that
$\norm{\varphi}$, which is defined to be
$\norm{\varphi}:=\sup_{\norm{b}\leq 1}|\varphi(b)|$, is always finite, 
and in fact $\norm{\varphi}=\lim_\lambda\varphi(u_\lambda)$, where
$(u_\lambda)_{\lambda}$ is any approximate unit of $B_e$. In
particular $\norm{\varphi_\xi}=\norm{\xi}^2$. If
$\bar{\pi}:C^*(\mb)\to B(H)$ is the integrated form of $\pi$, then
$\bar{\varphi}_\xi:C^*(\mb)\to \C$ (the integrated form of $\varphi$)
such that $\bar{\varphi}_\xi(x)=\pr{\bar{\pi}(x)\xi}{\xi}$ is a
positive linear functional, and
$\norm{\varphi_\xi}=\norm{\bar{\varphi}_\xi}=\norm{\xi}^2$. If
$\pi^c:C_c(\mb)\to\C$ is the 
restriction of $\bar{\pi}$ to $C_c(\mb)$, then
$\varphi^c_\xi(f)=\pr{\pi^c(f)\xi}{\xi}$ is a positive linear
functional on $C_c(\mb)$, which is continuous in the inductive limit
topology of $C_c(\mb)$. Besides, any positive linear functional on
$C_c(\mb)$ that is continuous in the inductive limit topology is of
this form. Note that $\varphi_\xi^c(f)=\int_G\pr{\pi(f(t))\xi}{\xi}dt$ $\forall f\in C_c(\mb)$. 
\begin{lem}\label{lem:condexplinfunct}
Let $\varphi:\mb\to\C$ be a linear functional of positive type, and
let $\tb:=(t_1,\ldots,t_n)\in G^n$. Define
$\varphi^\tb:\Mb_\tb(\mb)\to \C$ by
$\varphi^\tb(M):=\sum_{i,j=1}^n\varphi(M_{ij})$, $\forall
M=(M_{ij})\in \Mb_\tb(\mb)$. Then $\varphi^\tb$ is a positive
linear functional on $\Mb_\tb(\mb)$.  
\end{lem}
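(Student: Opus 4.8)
The plan is to dispatch linearity immediately and then reduce positivity of $\varphi^\tb$ to the very definition of a functional of positive type. Linearity of $\varphi^\tb$ is clear: each entry map $M\mapsto M_{ij}$ is linear on $\Mb_\tb(\mb)$ and $\varphi$ is quasi-linear, so $M\mapsto\varphi(M_{ij})$ is linear and $\varphi^\tb$ is a sum of such maps. For positivity, I would exploit that $\Mb_\tb(\mb)$ is a C*-algebra: every $P\in\Mb_\tb(\mb)^+$ can be written as $P=Q^*Q$ with $Q=P^{1/2}\in\Mb_\tb(\mb)$, so it suffices to prove $\varphi^\tb(Q^*Q)\geq 0$ for all $Q\in\Mb_\tb(\mb)$.

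The key step is an entrywise expansion combined with a row-by-row application of the positivity condition. Using $(Q^*)_{ik}=(Q_{ki})^*$ one has $(Q^*Q)_{ij}=\sum_{k=1}^n (Q_{ki})^*Q_{kj}$, where $(Q_{ki})^*Q_{kj}\in B_{t_it_k^{-1}}B_{t_kt_j^{-1}}\subseteq B_{t_it_j^{-1}}$, so the quasi-linearity of $\varphi$ on that fibre applies and gives
\[
\varphi^\tb(Q^*Q)=\sum_{i,j=1}^n\varphi\!\Big(\sum_{k=1}^n (Q_{ki})^*Q_{kj}\Big)=\sum_{k=1}^n\Big(\sum_{i,j=1}^n\varphi\big((Q_{ki})^*Q_{kj}\big)\Big).
\]
For each fixed $k$ the inner double sum is precisely $\sum_{i,j}\varphi(b_i^*b_j)$ for the finite sequence $b_i:=Q_{ki}$ ($i=1,\ldots,n$) in $\mb$ formed by the $k$-th row of $Q$; by the definition of a functional of positive type this quantity is $\geq 0$. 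Summing these nonnegative reals over $k$ yields $\varphi^\tb(Q^*Q)\geq 0$, which finishes the argument.

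Conceptually this is the amplification trick: writing $\varphi=\pr{\pi(\cdot)\xi}{\xi}$ via the Fell correspondence, one forms the block representation $\pi_n$ of $M_n(\mb)$ on $H^n$, restricts it to $\Mb_\tb(\mb)$, and takes the diagonal vector $\Xi=(\xi,\ldots,\xi)$, so that $\varphi^\tb(M)=\pr{\pi_n(M)\Xi}{\Xi}$ is a vector functional of a $*$-representation and hence positive. I prefer the elementary row-by-row computation above, since it sidesteps having to verify that the algebraic block map $\pi_n$ is a genuine bounded C*-representation of $\Mb_\tb(\mb)$. There is no serious obstacle here; the only point needing care in either route is the same, namely the appeal to the C*-algebra structure of $\Mb_\tb(\mb)$ to write an arbitrary positive element as $Q^*Q$, after which everything is index-and-fibre bookkeeping.
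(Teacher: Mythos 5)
Your proof is correct, and it takes a genuinely different route from the paper's. The paper argues via representation theory: by Fell's correspondence recalled just before the lemma, $\varphi=\varphi_\xi$ for some cyclic representation $\pi:\mb\to B(H)$ with cyclic vector $\xi$; it then forms the block representation $\pi^\tb:\Mb_\tb(\mb)\to B(H^n)$, $\pi^\tb(M)=(\pi(M_{ij}))$, and observes that $\varphi^\tb(M)=\pr{\pi^\tb(M)\xi^\tb}{\xi^\tb}$ with $\xi^\tb=(\xi,\ldots,\xi)$, so that $\varphi^\tb$ is a vector functional of a $*$-representation and hence positive --- exactly the amplification you describe and deliberately set aside. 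Your route instead goes back to the definition of positive type: write a positive $P\in\Mb_\tb(\mb)$ as $Q^*Q$, expand $(Q^*Q)_{ij}=\sum_{k}(Q_{ki})^*Q_{kj}$, use quasi-linearity of $\varphi$ on the fibre $B_{t_it_j^{-1}}$ (legitimate, since every summand $(Q_{ki})^*Q_{kj}$ lies in that fibre) to regroup the triple sum as $\sum_{k}\sum_{i,j}\varphi(b_i^*b_j)$ with $b_i:=Q_{ki}$, and invoke the defining inequality row by row. What you gain: the argument is purely algebraic and self-contained; it uses neither the continuity of $\varphi$ nor Fell's theorem that functionals of positive type arise from cyclic representations, and the C*-structure of $\Mb_\tb(\mb)$ (which the paper has already granted in Corollary~\ref{cor:Mpos}) enters only through writing positive elements as $Q^*Q$ --- indeed, if one takes $\varphi^\tb(Q^*Q)\geq 0$ for all $Q$ as the definition of positivity, no functional calculus is needed at all. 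What the paper gains: having just recalled the cyclic-representation machinery needed anyway for Theorem~\ref{thm:subalgebra}, the lemma becomes a two-line observation, and it exhibits $\varphi^\tb$ concretely as a vector state, the form in which such functionals are exploited afterwards; note, though, that the paper also simply asserts that $\pi^\tb$ is a representation, i.e.\ the verification you preferred to sidestep is glossed over there as well. Both proofs are sound.
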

\begin{proof}
We may suppose that $\varphi=\varphi_\xi$ for some cyclic
representation $\pi:\mb\to B(H)$ with cyclic vector $\xi$. It is clear
that $\pi^\tb:\Mb_\tb(\mb)\to M_n(B(H))\cong B(H^n)$ such that
$\pi^\tb(M)=(\pi(M_{ij}))$ is a representation and, if
$\xi^\tb:=(\xi,\ldots,\xi)\in H^n$, we have
$\varphi^\tb(M)=\pr{\pi^\tb(M)\xi^\tb}{\xi^\tb}$, so $\varphi^\tb$ is
a positive linear functional on $\Mb_\tb(\mb)$.  
\end{proof}

\begin{cor}\label{cor::condexplinfunct}
Let $E:\mb\to\ma$ be a continuous conditional expectation from $\mb$ to
$\ma$, and $\psi:\ma\to\C$ a linear functional of positive
type. Define $\varphi(b):=\psi(E(b))$, $\forall b\in \mb$. Then
$\varphi$ is a linear functional of positive type, and
$\norm{\varphi}=\norm{\psi}$.  
\end{cor}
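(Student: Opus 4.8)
The plan is to work directly with $\varphi=\psi\circ E$. Quasi-linearity and continuity are immediate, since on each fibre $B_t$ the map $E$ is linear and continuous (Definition~\ref{defn:condexp}) and $\psi$ is linear and continuous, so their composite is; thus the only substantive points are the positive-type inequality and the norm identity. For the positivity I would route everything through the auxiliary matrix algebras, exploiting the two positivity engines already available: Proposition~\ref{prop:condexppos}, which says that $E_\tb$ is positive, and Lemma~\ref{lem:condexplinfunct} applied to $\psi$, which says that $\psi^\tb$ is a positive linear functional on $\Mb_\tb(\ma)$.

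Concretely, given a finite family $b_1,\dots,b_n\in\mb$ with $b_i\in B_{t_i}$, I would set $\tb:=(t_1^{-1},\dots,t_n^{-1})\in G^n$ and $m:=(b_1^*,\dots,b_n^*)$. Since $b_i^*\in B_{t_i^{-1}}$, we have $m\in Y_\tb$, and the entries $b_i^*b_j\in B_{t_i^{-1}t_j}$ are exactly the $(i,j)$-membership condition for $\Mb_\tb(\mb)$; hence the left inner product $\pr{m}{m}_l=(b_i^*b_j)_{i,j}$ is a positive element of $\mathcal{K}(Y_\tb)=\Nb_\tb(\mb)\subseteq\Mb_\tb(\mb)$ by \eqref{eqn:ytbim}. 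Applying the two positive maps in succession, $E_\tb(\pr{m}{m}_l)=(E(b_i^*b_j))\in\Mb_\tb(\ma)^+$ by Proposition~\ref{prop:condexppos}, and then positivity of $\psi^\tb$ gives
\[
\sum_{i,j=1}^n\varphi(b_i^*b_j)=\sum_{i,j=1}^n\psi\big(E(b_i^*b_j)\big)=\psi^\tb\big(E_\tb(\pr{m}{m}_l)\big)\ge 0,
\]
which is precisely the positive-type condition for $\varphi$.

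For the norm identity I would prove the two inequalities separately. The bound $\norm{\varphi}\le\norm{\psi}$ is immediate from contractivity of $E$ (Corollary~\ref{cor:condexp}): $|\varphi(b)|=|\psi(E(b))|\le\norm{\psi}\,\norm{E(b)}\le\norm{\psi}\,\norm{b}$. For the reverse inequality I would test $\varphi$ against an approximate unit $(v_\mu)$ of $A_e$. Each $v_\mu$ lies in $A_e\subseteq B_e\subseteq\mb$ with $\norm{v_\mu}\le 1$ and satisfies $E(v_\mu)=v_\mu$ (since $E$ is the identity on $\ma$), so $\varphi(v_\mu)=\psi(v_\mu)$; using $\norm{\psi}=\lim_\mu\psi(v_\mu)$ from the approximate-unit formula recalled before Lemma~\ref{lem:condexplinfunct}, we obtain $\norm{\psi}=\lim_\mu\varphi(v_\mu)\le\norm{\varphi}$, and equality follows.

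The main obstacle — really the only non-routine point — is recognizing the correct index tuple $\tb=(t_1^{-1},\dots,t_n^{-1})$ and writing the Gram-type matrix $(b_i^*b_j)$ as the left inner product $\pr{m}{m}_l$ of $m=(b_1^*,\dots,b_n^*)$; this is exactly what makes the identification $\mathcal{K}(Y_\tb)=\Nb_\tb(\mb)$ (equivalently Corollary~\ref{cor:Mpos}) applicable and places $(b_i^*b_j)$ in $\Mb_\tb(\mb)^+$. Once this positivity is secured, the conclusion is obtained merely by composing the two positive maps $E_\tb$ and $\psi^\tb$ furnished by the preceding results, and the norm statement is elementary.
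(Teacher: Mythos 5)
Your proof is correct and follows essentially the same route as the paper: positivity of $\varphi$ is obtained by composing the positive map $E_\tb$ (Proposition~\ref{prop:condexppos}) with the positive functional $\psi^\tb$ (Lemma~\ref{lem:condexplinfunct}), and your Gram-matrix identification $(b_i^*b_j)=\pr{m}{m}_l$ with $m=(b_1^*,\ldots,b_n^*)\in Y_\tb$ merely makes explicit a step the paper leaves implicit. The only cosmetic difference is in the norm identity, where the paper notes that $(E(u_\lambda))$ is an approximate unit of $A_e$ for $(u_\lambda)$ an approximate unit of $B_e$ and gets $\norm{\varphi}=\lim_\lambda\varphi(u_\lambda)=\lim_\lambda\psi(E(u_\lambda))=\norm{\psi}$ in one line, whereas you prove the two inequalities separately (via contractivity of $E$ and an approximate unit of $A_e$); both arguments are equally valid.
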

\begin{proof}
A direct combination of Proposition~\ref{prop:condexppos} and
Lemma~\ref{lem:condexplinfunct} shows that $\varphi$ is a linear
functional of positive type, since $\sum_{i,j=1}^n\varphi(b_i^*b_j)=\psi^\tb\circ E_\tb(b_i^*b_j)\geq 0$, $\forall b_1,\ldots,b_n\in \mb$. Now, note that $(E(u_\lambda))$ is an
approximate unit of $A_e$ whenever $(u_\lambda)$ is an approximate
unit of $B_e$. Thus 
\[\norm{\varphi}=\lim_\lambda\varphi(u_\lambda)=\lim_\lambda\psi(E(u_\lambda))=\norm{\psi}.\]
\end{proof}

\begin{thm}\label{thm:subalgebra}
Let $\mb$ be a Fell bundle over the locally compact group $G$, and
suppose $E:\mb\to\ma$ is a continuous conditional expectation from
$\mb$ to its Fell subbundle $\ma$. Then $C^*(\ma)\subseteq C^*(\mb)$,
and $E$ extends uniquely to a conditional expectation
$E^u:C^*(\mb)\to C^*(\ma)$ from $C^*(\mb)$ to $C^*(\ma)$. 
\end{thm}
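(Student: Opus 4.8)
The plan is to transport states through $E$ and to compare the resulting cyclic representations. First I would record the easy half of the inclusion: every representation of $\mb$ restricts to a representation of $\ma$ whose integrated form agrees with the original one on $C_c(\ma)$, so $\norm{f}_{C^*(\mb)}\le\norm{f}_{C^*(\ma)}$ for every $f\in C_c(\ma)$, and hence the identity on $C_c(\ma)$ extends to a surjective $*$-homomorphism of $C^*(\ma)$ onto $\overline{C_c(\ma)}\subseteq C^*(\mb)$. Everything then comes down to the reverse inequality, and this is exactly where $E$ and Corollary~\ref{cor::condexplinfunct} enter.

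For the reverse inequality I would fix a state $\omega$ of $C^*(\ma)$. Its restriction to $C_c(\ma)$ is continuous in the inductive limit topology, so by Fell's correspondence it is the integrated form of $\psi:=\varphi_\eta$ for a cyclic representation $(\sigma,H_\sigma,\eta)$ of $\ma$, with $\norm{\eta}^2=\norm{\psi}=\norm{\omega}=1$. Set $\varphi:=\psi\circ E$; by Corollary~\ref{cor::condexplinfunct} it is a functional of positive type on $\mb$ with $\norm{\varphi}=\norm{\psi}=1$, hence of the form $\varphi_\xi$ for a cyclic representation $(\pi,H_\pi,\xi)$ of $\mb$ with $\norm{\xi}^2=1$. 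The key observation is that $E(a)=a$ for $a\in\ma$ gives $\varphi|_\ma=\psi$; by the uniqueness part of the correspondence the cyclic subrepresentation of $\pi|_\ma$ generated by $\xi$ is unitarily equivalent to $(\sigma,\eta)$, so there is an isometry $V:H_\sigma\to H_\pi$ with $V\eta=\xi$ and $V\sigma^c(g)=\pi^c(g)V$ for all $g\in C_c(\ma)$. Then $\omega(g^**g)=\norm{\sigma^c(g)\eta}^2=\norm{\pi^c(g)\xi}^2\le\norm{g}_{C^*(\mb)}^2$ for $g\in C_c(\ma)$, and taking the supremum over all states $\omega$ yields $\norm{g}_{C^*(\ma)}^2=\norm{g^**g}_{C^*(\ma)}\le\norm{g}_{C^*(\mb)}^2$. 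This gives the isometric inclusion $C^*(\ma)\subseteq C^*(\mb)$.

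To extend $E$ I would prove that $E^c$ is contractive for the two C*-norms, reusing the same data. For $h\in C_c(\mb)$ one has $\bar\varphi(h)=\omega(E^c(h))$, since both sides equal $\int_G\psi(E(h(t)))\,dt$. The crux is the identity $\pi^c(E^c(f))\xi=Q\,\pi^c(f)\xi$, where $Q$ is the orthogonal projection of $H_\pi$ onto $\overline{\pi^c(C_c(\ma))\xi}\cong H_\sigma$. Since $E^c(f)\in C_c(\ma)$, the vector $\pi^c(E^c(f))\xi$ already lies in that subspace, so it is enough to verify that $\pi^c(f)\xi-\pi^c(E^c(f))\xi$ is orthogonal to every $\pi^c(g)\xi$ with $g\in C_c(\ma)$; using $\bar\varphi=\omega\circ E^c$, the module identity $E^c(g^**f)=g^**E^c(f)$, and $E^c|_{C_c(\ma)}=\mathrm{id}$, both inner products $\pr{\pi^c(f)\xi}{\pi^c(g)\xi}$ and $\pr{\pi^c(E^c(f))\xi}{\pi^c(g)\xi}$ collapse to $\omega(g^**E^c(f))$. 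With the identity in hand, and using the isometry $V$, $\omega(E^c(f)^**E^c(f))=\norm{\sigma^c(E^c(f))\eta}^2=\norm{\pi^c(E^c(f))\xi}^2=\norm{Q\,\pi^c(f)\xi}^2\le\norm{\pi^c(f)\xi}^2=\bar\varphi(f^**f)\le\norm{f}_{C^*(\mb)}^2$; taking the supremum over states $\omega$ gives $\norm{E^c(f)}_{C^*(\ma)}\le\norm{f}_{C^*(\mb)}$.

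It then remains to assemble everything. By contractivity and density, $E^c$ extends to a contraction $E^u:C^*(\mb)\to C^*(\ma)$; idempotency and the relation $E^u|_{C^*(\ma)}=\mathrm{id}$ pass to the completions, so $E^u$ is a norm-one projection whose (closed) range is $\overline{C_c(\ma)}=C^*(\ma)$, a C*-subalgebra of $C^*(\mb)$ by the first part. Tomiyama's theorem \cite[Theorem~1.5.10]{bo} then makes $E^u$ automatically completely positive, completely contractive and a $C^*(\ma)$-bimodule map, hence a conditional expectation, and uniqueness is forced by the density of $C_c(\mb)$. I expect the projection identity $\pi^c(E^c(f))\xi=Q\,\pi^c(f)\xi$ to be the main obstacle: it is the step that upgrades the purely fibrewise information on $E$ (Corollary~\ref{cor:condexp} and the bimodule law for $E^c$) into genuine contractivity for the global convolution algebras, and it relies on correctly matching the two GNS pictures through the compatibility $\varphi|_\ma=\psi$.
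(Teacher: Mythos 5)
Your proof is correct, and while its first half runs parallel to the paper's, the second half takes a genuinely different route. For the inclusion $C^*(\ma)\subseteq C^*(\mb)$, both arguments transport positive-type functionals of $\ma$ through $E$ (Corollary~\ref{cor::condexplinfunct}) and compare the two C*-norms on $C_c(\ma)$; the paper does this with vector states of a fixed faithful representation of $C^*(\ma)$, you with arbitrary states via Fell's correspondence, which is the same mechanism. The divergence is in how contractivity of the extension is obtained. The paper proceeds in three steps: polarization gives $\norm{E^c(f)}_\ma\le 4\norm{f}_\mb$, the inflation argument makes $E^u$ completely positive with $\norm{E^u}_{cb}\le 4$, and finally evaluation on the approximate unit $\zeta_Vf_\lambda$ of $C^*(\mb)$ (using \cite[Lemma~5.3]{l}) pushes the norm down to $1$. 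You instead prove the GNS projection identity $\pi^c(E^c(f))\xi=Q\,\pi^c(f)\xi$ --- which is valid, by exactly the computation you indicate, from $\bar\varphi=\omega\circ E^c$, the bimodule identity $E^c(g^**f)=g^**E^c(f)$ and idempotency of $E^c$ --- and deduce $\omega(E^c(f)^**E^c(f))\le\norm{f}^2_\mb$ for every state $\omega$, hence the sharp bound $\norm{E^c(f)}_\ma\le\norm{f}_\mb$ in one stroke; Tomiyama's theorem \cite[Theorem~1.5.10]{bo} then upgrades the resulting norm-one projection onto the C*-subalgebra $C^*(\ma)$ to a conditional expectation. Your route avoids the factor of $4$, the inflations, and the approximate-unit computation; the paper's route has the merit of establishing complete positivity and complete contractivity by hand, without appealing to Tomiyama.

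One small repair is needed in your use of the uniqueness part of Fell's correspondence: it identifies $(\sigma,\eta)$ with the cyclic subrepresentation of $\pi|_\ma$ generated by $\xi$, whose cyclic vector is $Q\xi$, not $\xi$; a priori $\xi$ need not lie in $\overline{\pi^c(C_c(\ma))\xi}$, so the isometry $V$ satisfies $V\eta=Q\xi$ rather than $V\eta=\xi$. This costs nothing, because every vector you actually pair against lies in the range of $Q$: for $g\in C_c(\ma)$ one has $\pi^c(g)Q\xi=Q\pi^c(g)\xi=\pi^c(g)\xi$, so $\norm{\sigma^c(g)\eta}=\norm{\pi^c(g)\xi}$ still holds (in particular for $g=E^c(f)$). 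Alternatively, $V$ can be bypassed entirely: since $\bar\varphi=\omega\circ E^c$ and $E^c$ is the identity on $C_c(\ma)$, one gets $\omega(g^**g)=\bar\varphi(g^**g)=\norm{\pi^c(g)\xi}^2$ directly, which is all your estimates require.
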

\begin{proof}
If $\norm{\ }_\mb$ and $\norm{\ }_\ma$ denote the norms of $C^*(\mb)$
and $C^*(\ma)$ respectively, we know that $\norm{\ }_\mb\leq\norm{\
}_\ma$ on $C_c(\ma)$. So to prove that they are equal on $C_c(\ma)$,
and therefore that $\overline{C_c(\ma)}^{\norm{\ }_\mb}=C^*(\ma)$, we
need to prove the opposite inequality. To this end, consider a 
representation $\pi:\ma\to B(H)$ such that its integrated
form $\bar{\pi}:C^*(\ma)\to B(H)$ is faithful and non-degenerate. Pick
$\xi\in H$, and let $\psi_\xi:\ma\to \C$ and $\varphi:\mb\to\C$ be
given by $\psi_\xi(a)=\pr{\pi(a)\xi}{\xi}_H$ and
$\varphi(b):=\psi_\xi(E(b))$, $\forall a\in \ma$, $b\in\mb$. According
with Corollary~\ref{cor::condexplinfunct} and the comments that
precede it, both $\psi_\xi$ and $\varphi$ are linear functionals of
positive type, and
$\norm{\psi_\xi}=\norm{\varphi}=\norm{\xi}^2$. In particular there is
a cyclic representation $\rho:\mb\to B(K)$ with cyclic vector $\eta$, 
such that $\varphi=\varphi_\eta$, that is
$\varphi(b)=\pr{\rho(b)\eta}{\eta}$, $\forall b\in B$. Note that
$\norm{\eta}=\sqrt{\norm{\varphi}}=\sqrt{\norm{\psi_\xi}}=\norm{\xi}$.  
\par Now if $f\in C_c(\mb)$ we have
$\bar{\varphi}(f)=\int_{G}\pr{\pi(E(f(t)))\xi}{\xi}dt=\pr{\bar{\pi}(E^c(f))\xi}{\xi}$. Thus: 
\begin{equation}\label{eqn:E1}
\norm{f}^2_\mb\norm{\eta}^2
\geq\norm{\bar{\rho}(f)\eta}^2
=\bar{\varphi}(f^**f)
=\pr{\bar{\pi}(E^c(f^**f))\xi}{\xi}
\geq 0,
\end{equation}
where the latter inequality holds because $\bar{\varphi}$ is a
positive linear functional. Since \eqref{eqn:E1} holds for any vector
$\xi$, and $\norm{\eta}=\norm{\xi}$, we see that
$\bar{\pi}(E^c(f^**f))$ is a positive 
operator with norm bounded by $\norm{f}^2_\mb$. Since $\bar{\pi}$ is
faithful, we conclude that $E^c(f^**f)$ is positive in
$C^*(\ma)$, and 
\begin{equation}\label{eqn:E2}
\norm{E^c(f^**f)}_\ma
=\norm{\bar{\pi}(E^c(f^**f))}\leq\norm{f^**f}_\mb.
\end{equation} 
So if $g\in C_c(\ma)$:  
$\norm{g}_\ma^2=\norm{g^**g}_\ma=\norm{E^c(g^**g)}_\ma\leq\norm{g^**g}_\mb=\norm{g}_\mb^2$. Thus
$\norm{\ }_\ma\leq\norm{\ }_\mb$ on $C_c(\mb)$, as claimed, and
therefore $C^*(\ma)\subseteq C^*(\mb)$.     
\par We next show that $E^c$ is continuous with respect to $\norm{\
}_\mb$. Let $\xi_1,\xi_2\in H$, and define
$\psi_{\xi_1+i^k\xi_2}(a):=\pr{\pi(a)(\xi_1+i^k\xi_2)}{\xi_1+i^k\xi_2}$,
$\forall a\in \ma$. Then by polarization we can write:
\[\pr{\pi(a)\xi_1}{\xi_2}
=\frac{1}{4}\sum_{k=0}^3i^k\pr{\pi(a)(\xi_1+i^k\xi_2)}{\xi_1+i^k\xi_2}
=\frac{1}{4}\sum_{k=0}^3i^k\psi_{\xi_1+i^k\xi_2}(a).\] 
If $\varphi_k(b):=\psi_{\xi_1+i^k\xi_2}(E(b))$ $\forall b\in \mb$, as
before there exist cyclic representations $(\rho_k,\eta_k)$ such that
$\varphi_k(b)=\pr{\rho_k(b)\eta_k}{\eta_k}$, and
$\norm{\eta_k}=\norm{\xi_1+i^k\xi_2}$ $\forall k=0,1,2,3$. Then, passing to the integrated representations, if $f\in C_c(\mb)$ and $\norm{\xi_1}=1=\norm{\xi_2}$ (so
$\norm{\eta_k}\leq 2$): 
\begin{gather*}
|\pr{\bar{\pi}(E^c(f))\xi_1}{\xi_2}|
=\frac{1}{4}\Big|\sum_{k=0}^3i^k\pr{\bar{\pi}(E^c(f))(\xi_1+i^k\xi_2)}{\xi_1+i^k\xi_2}\Big|\\
=\frac{1}{4}\sum_{k=0}^3i^k\psi_{\xi_1+i^k\xi_2}(E^c(f)) 
=\frac{1}{4}\Big|\sum_{k=0}^3i^k\bar{\varphi}_k(f)\Big|
=\frac{1}{4}\Big|\sum_{k=0}^3i^k\pr{\bar{\rho}(f)\eta_k}{\eta_k}\Big|\\
\leq\frac{1}{4}\sum_{k=0}^3\norm{\bar{\rho_k}(f)}\norm{\eta_k}^2
\leq\frac{1}{4}\sum_{k=0}^3\norm{f}_\mb\norm{\eta_k}^2
%\norm{\bar{\rho_k}(f)}\leq 4\norm{\bar{\rho}(f)}
\leq 4\norm{f}_\mb.
\end{gather*}
It follows that 
\[\norm{E^c(f)}_\mb
=\norm{E^c(f)}_\ma
= \norm{\bar{\pi}(E^c(f))}
=\!\!\!\sup_{\norm{\xi_1},\norm{\xi_2}=1}\!|\pr{\bar{\pi}(E^c(f))\xi_1}{\xi_2}|
\leq 4\norm{f}_\mb.\]
Thus $E^c$ is continuous in $\norm{\ }_\mb$, with $\norm{E^c}\leq 4$,
so it extends by continuity to a positive  
idempotent map $E^u:C^*(\mb)\to C^*(\ma)$ whose image is clearly
$C^*(\ma)$.  
\par If $E_n:M_n(\mb)\to M_n(\ma)$ is the inflation of $E$ (see
Example~\ref{exmp:inflations}), $E_n$ is a 
continuous conditional expectation from $M_n(\mb)$ to $M_n(\ma)$. Thus
by the previous part of the proof we have $C^*(M_n(\ma))\subseteq
C^*(M_n(\mb))$, and we have a conditional expectation
$(E_n)^u:C^*(M_n(\mb))\to C^*(M_n(\ma))$ from $C^*(M_n(\mb))$ to
$C^*(M_n(\ma))$ that extends $E_n$. On the other hand we can consider
the inflation $(E^u)_n:M_n(C^*(\mb))\to M_n(C^*(\ma))$. Under the
natural identifications $C^*(M_n(\mb))\cong M_n(C^*(\mb))$, it is easy
to see that $(E^u)_n=(E^u)_n$, because they are both continuous and
they obviously agree on $C_c(M_n(\mb))\cong M_n(C_c(\mb))$. Thus each
$(E^u)_n$ is a positive map and $\norm{(E^u)_n}\leq 4$. In other
words, $E^u$ is a completely 
positive map such that $\norm{E}_{cb}\leq 4$. Since $E^u$ is
completely positive, we in fact have 
$\norm{E}=\norm{E}_{cb}=\sup_\nu\norm{E^u(w_\nu)}$, where
$(w_\nu)$ is any approximate unit of $C^*(\mb)$ (see for instance
\cite[Lemma~5.3]{l}). By \cite[VIII.5.11]{fd}, we can take an
approximate unit $(h_{V,\lambda})_{(V,\lambda)}$ of $C^*(\mb)$ of the
form $h_{V,\lambda}=\zeta_Vf_\lambda$, where $f_\lambda\in C_c(\mb)$
is such that  
$(f_\lambda(e))_\lambda$ is an approximate unit of $B_e$ and
$\norm{f_\lambda}_\infty\leq 1$ $\forall \lambda$,
and $\zeta_V\in C_c(G)$ is a non-negative continuous function with
support contained in the compact neighborhood $V$ of $e$, and such
that $\int_G\zeta_V=1$ $\forall V$.       
Therefore:    
\begin{gather}\label{eqn:end1}
\norm{E^u(\zeta_Vf_\lambda)}
=\norm{\bar{\pi}(E^c(\zeta_Vf_\lambda))}
=\sup_{\norm{\xi_1}=1,\norm{\xi_2}=1}|\pr{\bar{\pi}(E^c(\zeta_Vf_\lambda))\xi_1}{\xi_2}|
\end{gather}
We have 
\begin{gather}
|\pr{\bar{\pi}(E^c(\zeta_Vf_\lambda))\xi_1}{\xi_2}|
=\Big|\int_G\pr{\pi(E(\zeta_V(t)f_\lambda(t)))\xi_1}{\xi_2}dt\Big|\label{eqn:end2}\\    
\leq\int_G\zeta_V(t)\Big|\pr{\pi(E(f_\lambda(t)))\xi_1}{\xi_2}|\Big|dt
\leq \int_G\zeta_V(t)\norm{E(f_\lambda)}_\infty \norm{\xi_1}\,\norm{\xi_2}dt. \label{eqn:end3}   
\end{gather}
Now, since $E$ is contractive, $\norm{f_\lambda}_\infty\leq 1$ and
$\int_G\zeta_V(t)dt=1$, from \eqref{eqn:end1} and \eqref{eqn:end2}--\eqref{eqn:end3} we
conclude that
$\norm{E^u}_{cb}=\sup_{V,\lambda}\norm{E^u(\zeta_Vf_\lambda)}\leq
1$, which ends the proof. 
\end{proof}

\begin{defn}\label{defn:intform}
In the conditions of Theorem~\ref{thm:subalgebra}, we will say that
the conditional expectation $E^u$ is the integrated form of the
conditional expectation~$E$.  
\end{defn}

\begin{cor}[Amenability]\label{cor:subalgebra}
Let $\mb$ be a Fell bundle over the locally compact group $G$, and
suppose $E:\mb\to\ma$ is a continuous conditional expectation from
$\mb$ to its Fell subbundle $\ma$. If $C^*(\mb)=C_r^*(\mb)$, then also 
$C^*(\ma)=C_r^*(\ma)$. 
\end{cor}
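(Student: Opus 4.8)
The plan is to reduce the amenability of $\ma$ to a short chain of three norm identities on the dense $*$-subalgebra $C_c(\ma)$, two supplied by results already in hand and one by the hypothesis. Recall that $\ma$ is amenable precisely when the full and reduced norms coincide on $C_c(\ma)$, i.e.\ when $\norm{f}_{C^*(\ma)}=\norm{f}_{C^*_r(\ma)}$ for every $f\in C_c(\ma)$; since the reduced norm is always dominated by the full one, it is enough to establish the reverse inequality $\norm{f}_{C^*(\ma)}\leq\norm{f}_{C^*_r(\ma)}$.

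First I would invoke Theorem~\ref{thm:subalgebra}, whose conclusion $C^*(\ma)\subseteq C^*(\mb)$ means exactly that the two \emph{full} norms agree on $C_c(\ma)$, namely $\norm{f}_{C^*(\ma)}=\norm{f}_{C^*(\mb)}$. This is the step in which the existence of the conditional expectation $E$ is indispensable: as the introductory discussion emphasizes, without such an $E$ the algebra $C^*(\ma)$ need not embed into $C^*(\mb)$ at all. Next, the amenability hypothesis $C^*(\mb)=C^*_r(\mb)$ asserts that on all of $C_c(\mb)$—and in particular on the subspace $C_c(\ma)$—the full and reduced norms of $\mb$ coincide, so $\norm{f}_{C^*(\mb)}=\norm{f}_{C^*_r(\mb)}$. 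Finally, the unconditionally valid inclusion $C^*_r(\ma)\subseteq C^*_r(\mb)$ of \cite[Proposition~3.2]{fa} is isometric, which gives $\norm{f}_{C^*_r(\mb)}=\norm{f}_{C^*_r(\ma)}$ for every $f\in C_c(\ma)$.

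Chaining these three identities yields
\[
\norm{f}_{C^*(\ma)}=\norm{f}_{C^*(\mb)}=\norm{f}_{C^*_r(\mb)}=\norm{f}_{C^*_r(\ma)}
\]
for all $f\in C_c(\ma)$, whence the full and reduced completions of $C_c(\ma)$ coincide and $C^*(\ma)=C^*_r(\ma)$, as desired. There is no genuine analytic obstacle here, since all the real work was already carried out in proving Theorem~\ref{thm:subalgebra}; the only point demanding care is the bookkeeping of which inclusion supplies which identity. The conceptual content is that the conditional expectation enters solely through the \emph{full}-algebra inclusion, while the \emph{reduced}-algebra inclusion holds automatically, so that the amenability of $\mb$ is transported down to $\ma$ simply by collapsing the full and reduced norms onto one another across the pair of bundles.
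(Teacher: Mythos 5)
Your proof is correct and is essentially the paper's own argument: the paper likewise combines Theorem~\ref{thm:subalgebra} (full norms of $\ma$ and $\mb$ agree on $C_c(\ma)$), the isometric reduced inclusion from \cite[Proposition~3.2]{fa}, and the hypothesis $C^*(\mb)=C_r^*(\mb)$, phrased there as identifying $C^*(\ma)$ and $C^*_r(\ma)$ with the closures of $C_c(\ma)$ inside $C^*(\mb)$ and $C^*_r(\mb)$ respectively. Your norm-identity chain is just this closure argument written out explicitly, so there is nothing to correct.
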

\begin{proof}
By Theorem~\ref{thm:subalgebra}, $C^*(\ma)$ is the closure of
$C_c(\ma)$ within $C^*(\mb)$, and on the other hand $C^*_r(\ma)$ is
the closure of $C_c(\ma)$ within $C^*_r(\mb)$
(\cite[Proposition~3.2]{fa}). Since $C^*(\mb)=C_r^*(\mb)$, we then
conclude that $C^*(\ma)=C_r^*(\ma)$.   
\end{proof}

%\begin{cor}\label{cor:condexpdil}
%\end{cor}

\begin{rem}\label{rem:todiscrete}
If $\mb$ is a Fell bundle over the locally compact group $G$, we denote by $G_d$ the group $G$ with the discrete topology, and by $\mb_d$ the Fell bundle over $G_d$ obtained by $\mb$ by forgetting its original topology, so $\mb_d$ is just the disjoint union of the Banach spaces $B_t$, $t\in G$. Then it is clear that if $E:\mb\to \ma$ is a continuous conditional expectation, then $E:\mb_d\to\ma_d$ is a conditional expectation as well. Therefore we also have that $C^*(\ma_d)\subseteq C^*(\mb_d)$ and $E$ defines a conditional expaectation $E_d^u:C^*(\mb_d)\to C^*(\ma_d)$.  
\end{rem}

\begin{prop}[Functoriality]\label{prop:functoriality}
Let $E:\mb\to \ma$ and $F:\mathcal{D}\to \mc$ be continuous conditional
expectations on the Fell bundles $\mb$ and $\mc$, and suppose
that $\rho:\mb\to\mathcal{D}$ is a homomorphism of Fell bundles that
intertwines $E$ and $F$: $\rho\circ E=F\circ\rho$. Then, the
induced homomorphism $\bar{\rho}:C^*(\mb)\to C^*(\mathcal{D})$ intertwines $E^u$ 
and $F^u$. 
\end{prop}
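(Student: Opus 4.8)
The plan is to verify the identity $\bar\rho\circ E^u=F^u\circ\bar\rho$ first on the dense $*$-subalgebra $C_c(\mb)\subseteq C^*(\mb)$ and then invoke continuity. Recall that a homomorphism of Fell bundles $\rho:\mb\to\mathcal{D}$ is fibrewise (it carries $B_t$ into $D_t$), so it induces a $*$-homomorphism $\rho^c:C_c(\mb)\to C_c(\mathcal{D})$ by $\rho^c(f)(t):=\rho(f(t))$, whose continuous extension is precisely $\bar\rho$. Likewise we have the fibrewise maps $E^c:C_c(\mb)\to C_c(\ma)$ and $F^c:C_c(\mathcal{D})\to C_c(\mc)$ from the beginning of Section~3, with $E^u$ and $F^u$ their continuous extensions provided by Theorem~\ref{thm:subalgebra}.

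First I would record the compatibility of $\rho$ with the subbundles. Since $E(\mb)=\ma$ by definition and $\rho\circ E=F\circ\rho$, we get $\rho(\ma)=\rho(E(\mb))=F(\rho(\mb))\subseteq\mc$; hence $\rho^c(C_c(\ma))\subseteq C_c(\mc)$ and $\bar\rho(C^*(\ma))\subseteq C^*(\mc)$, so that both compositions in the statement are well defined as maps $C^*(\mb)\to C^*(\mathcal{D})$.

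Next I would carry out the computation on $C_c(\mb)$ fibrewise. For $f\in C_c(\mb)$ and $t\in G$,
\[
\rho^c(E^c(f))(t)=\rho\bigl(E(f(t))\bigr)=F\bigl(\rho(f(t))\bigr)=F^c(\rho^c(f))(t),
\]
where the middle equality is exactly the hypothesis $\rho\circ E=F\circ\rho$ applied to $f(t)\in\mb$. Thus $\rho^c\circ E^c=F^c\circ\rho^c$ on $C_c(\mb)$. Since on $C_c(\mb)$ the maps $\bar\rho$, $E^u$, $F^u$ restrict to $\rho^c$, $E^c$, $F^c$ respectively (and $E^c(f)\in C_c(\ma)$, $\rho^c(f)\in C_c(\mathcal{D})$ remain in the relevant compacted algebras), this yields $\bar\rho\circ E^u=F^u\circ\bar\rho$ on the dense subalgebra $C_c(\mb)$.

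Finally I would pass to the closure. All four maps $\bar\rho$, $E^u$, $F^u$ are continuous---$\bar\rho$ as a $*$-homomorphism of C*-algebras and $E^u,F^u$ as conditional expectations by Theorem~\ref{thm:subalgebra}---so $\bar\rho\circ E^u$ and $F^u\circ\bar\rho$ are continuous maps agreeing on the dense subspace $C_c(\mb)$, whence they agree on all of $C^*(\mb)$. I do not anticipate a genuine obstacle here; the only point requiring care is the bookkeeping of the inclusions $C^*(\ma)\subseteq C^*(\mb)$ and $C^*(\mc)\subseteq C^*(\mathcal{D})$, ensuring that $\bar\rho$ really restricts to a map $C^*(\ma)\to C^*(\mc)$ compatible with $F^u$ on the image, which is exactly what $\rho(\ma)\subseteq\mc$ guarantees.
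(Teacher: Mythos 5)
Your proof is correct and follows essentially the same route as the paper: verify $\rho^c\circ E^c=F^c\circ\rho^c$ fibrewise on the dense subalgebra $C_c(\mb)$ (immediate from $\rho\circ E=F\circ\rho$), then conclude by continuity of $\bar\rho\circ E^u$ and $F^u\circ\bar\rho$. Your additional bookkeeping (e.g.\ $\rho(\ma)=\rho(E(\mb))=F(\rho(\mb))\subseteq\mc$) is a sound elaboration of details the paper leaves implicit.
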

\begin{proof}
It is immediate that $\rho_c\circ E^c(f)=F^c\circ\rho_c(f)$ $\forall f\in
C_c(\mb)$, so the result follows from the continuity of the maps
$\bar{\rho}\circ E^u$ and $F^u\circ\bar{\rho}$.   
\end{proof}

\begin{prop}\label{prop:Etis condexp}
Let $E:\mb\to\ma$ be a conditional expectation from $\mb$ to $\ma$,
and $\tb\in G^n$. Let $E_\tb:\Mb_\tb(\mb)\to \Mb_\tb(\ma)$ be such
that $E_\tb(M):=(E(M_{ij}))$. Then $E_\tb$ is a continuous conditional
expectation from $\Mb_\tb(\mb)$ to $\Mb_\tb(\ma)$.
\end{prop}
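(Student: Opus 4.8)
The plan is to deduce the statement from Theorem~\ref{thm:subalgebra} by realizing $E_\tb$ as the restriction of an integrated conditional expectation on an inflated bundle, after passing to the discrete topology. By Proposition~\ref{prop:condexppos} we already know that $E_\tb$ is an idempotent, positive $\Mb_\tb(\ma)$-bimodule map onto $\Mb_\tb(\ma)$; in view of Tomiyama's theorem it therefore suffices to exhibit $\Mb_\tb(\ma)$ as a C*-subalgebra of $\Mb_\tb(\mb)$ and to check that $E_\tb$ is contractive, and I will obtain both at once.

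First I would observe that the algebra $\Mb_\tb(\mb)$ and the map $E_\tb$ depend only on the fibres of $\mb$ and on the fibrewise map $E$, not on the topology of $G$; hence $\Mb_\tb(\mb)=\Mb_\tb(\mb_d)$ and $E_\tb$ is unchanged if we replace $\mb$ by the Fell bundle $\mb_d$ over $G_d$ of Remark~\ref{rem:todiscrete}. This is the crucial reduction: over a discrete group every fibre $B_s$ embeds isometrically in $C^*(\mb_d)$ as the sections supported at $\{s\}$, so the purely matricial algebra $\Mb_\tb(\mb_d)$ embeds, entrywise, into $M_n(C^*(\mb_d))$. The assignment $M\mapsto(M_{ij})$ is an injective $*$-homomorphism (the product of point masses is the bundle product $B_{t_it_k^{-1}}B_{t_kt_j^{-1}}\subseteq B_{t_it_j^{-1}}$), hence isometric, so its image is a C*-subalgebra of $M_n(C^*(\mb_d))$ carrying exactly the intrinsic C*-norm of $\Mb_\tb(\mb)$.

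Next I would apply Theorem~\ref{thm:subalgebra}, in the form recorded in Remark~\ref{rem:todiscrete}, to obtain the inclusion $C^*(\ma_d)\subseteq C^*(\mb_d)$ together with the integrated conditional expectation $E_d^u\colon C^*(\mb_d)\to C^*(\ma_d)$. Inflating (Example~\ref{exmp:inflations}) yields a conditional expectation $(E_d^u)_n\colon M_n(C^*(\mb_d))\to M_n(C^*(\ma_d))$, acting entrywise by $E_d^u$. Since $E_d^u$ extends the fibrewise map $E$, on each fibre $B_s\subseteq C^*(\mb_d)$ it is just $E\colon B_s\to A_s$; consequently $(E_d^u)_n$ carries $\Mb_\tb(\mb_d)$ into itself, its restriction there coincides with $E_\tb$, and its image is $\Mb_\tb(\ma_d)$, which is thereby exhibited as a C*-subalgebra of $\Mb_\tb(\mb_d)$.

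Finally, $E_\tb$ is the restriction of the completely contractive, completely positive idempotent $(E_d^u)_n$ to the $(E_d^u)_n$-invariant C*-subalgebra $\Mb_\tb(\mb_d)$, with image the C*-subalgebra $\Mb_\tb(\ma_d)$; such a restriction is at once contractive and idempotent onto a C*-subalgebra, so Tomiyama's theorem identifies it as a conditional expectation, and translating back through $\Mb_\tb(\mb)=\Mb_\tb(\mb_d)$ finishes the argument. I expect the only delicate point to be the bookkeeping of the identification $\Mb_\tb(\mb)=\Mb_\tb(\mb_d)\hookrightarrow M_n(C^*(\mb_d))$ and the verification that the intrinsic C*-norm of $\Mb_\tb(\mb)$ is the one inherited from $M_n(C^*(\mb_d))$; once uniqueness of the C*-norm on the underlying $*$-algebra is invoked this becomes routine, and everything else is formal.
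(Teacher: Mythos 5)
Your proposal is correct, and it shares the paper's skeleton --- the reduction to the discrete group $G_d$ via Remark~\ref{rem:todiscrete}, the input of Proposition~\ref{prop:condexppos}, and the crucial use of the integrated expectation $E_d^u$ from Theorem~\ref{thm:subalgebra} --- but the mechanism by which you transfer contractivity from $E_d^u$ to $E_\tb$ is genuinely different. The paper fixes a nondegenerate representation $\pi$ of $\ma$ that is faithful on $A_e$, forms the entrywise representation $\pi_\tb$ of $\Mb_\tb(\ma)$ (faithful, hence isometric), and then applies Stinespring's dilation theorem to the ccp map $\bar{\pi}\circ E^u$ to write $\pi_\tb(E_\tb(M))=W_n^*\rho_\tb(M)W_n$ with $W$ a contraction, which gives $\norm{E_\tb(M)}\leq\norm{M}$ directly. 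You instead embed $\Mb_\tb(\mb_d)$ entrywise, via point masses $b\mapsto b\delta_{t_it_j^{-1}}$, into $M_n(C^*(\mb_d))$, and realize $E_\tb$ as the restriction of the inflated conditional expectation $(E_d^u)_n$ of Example~\ref{exmp:inflations}; contractivity, idempotence and the C*-subalgebra inclusion $\Mb_\tb(\ma)\subseteq\Mb_\tb(\mb)$ are then inherited, and Tomiyama's theorem finishes. What your route buys: no dilation theory, and the inclusion $\Mb_\tb(\ma)\subseteq\Mb_\tb(\mb)$ as C*-algebras is made explicit rather than implicit. What it requires, and what you rightly flag as the delicate point: the step ``injective $*$-homomorphism, hence isometric'' presupposes that $\Mb_\tb(\mb_d)$ is already a C*-algebra in its intrinsic norm --- this is secured in the paper through the identification of the ideals $\Nb_{\tb t_k^{-1}}(\mb)$ with $\mathcal{K}(Y_{\tb t_k^{-1}})$ and Corollary~\ref{cor:Mpos} --- and it also uses the isometric embedding of each fiber $B_s$ as point masses in $C^*(\mb_d)$, which is the same standard fact the paper invokes in the guise $\pi(a_t)=\bar{\pi}(a_t\delta_t)$ with $\pi|_{A_e}$ faithful. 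With those two facts granted (and both are available from the paper's earlier results), your argument is complete and arguably cleaner than the one in the text.
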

\begin{proof}
The continuity of $E_\tb$ follows at once from that of $E$, since the convergence in $\Mb_\tb(\mb)$ and in $\Mb_\tb(\ma)$ is the convergence entrywise. For the rest of the proof we do not lose generality by assuming, as we will do, that the group $G$ is discrete (see Remark~\ref{rem:todiscrete}). Since $E_\tb$ is an idempotent positive map onto $\Mb_\tb(\ma)$, we only need to prove that $\norm{E_\tb}\leq 1$. 
\par Let $\pi:\ma\to B(H)$ be a non-degenerate representation of $\ma$ on the Hilbert space $H$, and $\bar{\pi}:C^*(\ma)\to B(H)$ the integrated representation of $\pi$. Therefore $\pi(a_t)=\bar{\pi}(a_t\delta_t)$ $\forall a_t\in A_t$, $t\in G$, where $a_t\delta_t\in C_c(\ma)\subseteq C^*(\ma)$ is such that $a_t\delta_t(r)=a_t$ if $r=t$,and $a_t\delta_t(r)=0$ otherwise. We will suppose that $\pi|_{A_e}$ is faithful. Then the restriction of $\pi$ to each fiber is isometric and therefore injective. Define $\pi_\tb:\Mb_\tb(\ma)\to M_n(B(H))\cong B(H^n)$ by $\pi_\tb(N)=(\pi(N_{ij}))$ (recall that $n=|\tb|$, and that $N_{ij}\in A_{t_it_j^{-1}}$ $\forall j=1,\ldots,n$). It is immediate to check that $\pi_\tb$ is a faithful representation of $\Mb_\tb(\ma)$. %Let $\bar{\pi}:C^*(\ma)\to B(H)$ be the integrated representation of $\pi$. Therefore $\pi(N_{ij})=\bar{\pi}(N_{ij}\delta_{t_it_j^{-1}})$
Consider now the conditional expectation $E^u:C^*(\mb)\to C^*(\ma)$ provided by Theorem~\ref{thm:subalgebra}. Then the composition $\bar{\pi}\circ E^u:C^*(\mb)\to B(H)$ is a completely positive and completely contractive map. Thus by Stinespring's theorem \cite[Theorem~1.5.3 and Remark~1.5.4]{bo} there exist a non-degenerate representation $\bar{\rho}:C^*(\mb)\to B(K)$ and a contraction $W\in B(H,K)$, such that $\bar{\pi}\circ E^u(x)=W^*\bar{\rho}(x)W$, $\forall z\in C^*(\mb)$. Let $\rho:\mb\to B(K)$ be the desintegrated representation of $\bar{\rho}$. For $M=(M_{ij})\in \Mb_\tb(\mb)$ we have:
\begin{gather*}
  \pi_\tb(E_\tb(M))=\big(\pi(E(M_{ij}))\big)=\big(\bar{\pi}(E(M_{ij})\delta_{t_it_j^{-1}})\big)
  =\big(\bar{\pi}(E^u(M_{ij}\delta_{t_it_j^{-1}})\big)\\
  =\big(W^*\bar{\rho}(M_{ij}\delta_{t_it_j^{-1}})W\big)
  =W^*_n\big(\rho(M_{ij})\big)W_n
  =W^*_n\rho_\tb(M_{ij})W_n,   
\end{gather*}
where $W_n=\textrm{diag}_{n}(W,\ldots,W)$. Then, since $\pi_\tb$ is isometric and $W_n$ and $\rho$ are contractions, we conclude that
\[\norm{E_\tb(M)}=\norm{\pi_\tb(E_\tb(M))}
  =\norm{W^*_n\rho_\tb(M_{ij})W_n}
  \leq\norm{W_n}^2\norm{\rho_\tb(M_{ij})}
  \leq\norm{M}. 
\]
Therefore $E_\tb$ is contractive, which ends the proof.
\end{proof}

\section{Conditional expectation on the reduced C*-algebra} 

\par We will see next that any continuous conditional expectation
$E:\mb\to\ma$ from the Fell bundle $\mb$ to its Fell subbundle $\ma$ can
be extended to a conditional expectation $E^r:C_r^*(\mb)\to C_r^*(\ma)$. Let us call $\mu:\ma\to\mb$ the natural inclusion. Recall that $C^*_r(\ma)$ can be
identified with the closure of the natural inclusion of
$\mu^c:C_c(\ma)\hookrightarrow C_c(\mb)$ within $C^*_r(\mb)$
\cite[Proposition~3.2]{fa}. However, the fact that, as in our case, we have that $C^*(\ma)\subseteq C^*(\mb)$, allows us to give a very concrete way of identifying $C^*_r(\ma)$ inside $C^*_r(\mb)$:

\begin{prop}\label{prop:identifying}
If $\ma$ is a Fell subbundle of the Fell bundle $\mb$ such that $C^*(\ma)\subseteq C^*(\mb)$, then the map $\Lambda^\ma_x\stackrel{\mu^r}{\mapsto}\Lambda^\mb_x$ from $C^*_r(\ma)$ to $C^*_r(\mb)$ is a well defined isometric homomorphism of C*-algebras.  
\end{prop}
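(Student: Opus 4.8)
The plan is to reduce the statement to the single norm identity
\[
\norm{\Lambda^\ma_x}=\norm{\Lambda^\mb_x}\qquad\text{for all }x\in C^*(\ma),
\]
where on the right $x$ is viewed as an element of $C^*(\mb)$ through the given inclusion. First I would note that $C^*(\ma)\subseteq C^*(\mb)$, being an injective $*$-homomorphism of C*-algebras, is automatically isometric, so that $\norm{x}_\ma=\norm{x}_\mb$ for $x\in C^*(\ma)$ and, in particular, $\Lambda^\mb_x\in C^*_r(\mb)$ is a meaningful expression for every $x\in C^*(\ma)$. This is precisely where the standing hypothesis is used: without $C^*(\ma)\subseteq C^*(\mb)$ one could write $\Lambda^\mb_f$ only for $f\in C_c(\ma)$, not for a general element of $C^*(\ma)$, and the map $\mu^r$ could not be defined on all of $C^*_r(\ma)$.

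The only substantive step is the norm identity itself, which amounts to comparing the two a priori different Hilbert modules $L^2(\ma)$ and $L^2(\mb)$ carrying the two regular representations. For $f\in C_c(\ma)$ the identity is exactly \cite[Proposition~3.2]{fa}, which says that the reduced norm of a compactly supported section of $\ma$ is the same whether computed in $C^*_r(\ma)$ or in $C^*_r(\mb)$, equivalently that $C^*_r(\ma)$ is isometrically the closure of $C_c(\ma)$ inside $C^*_r(\mb)$. I would then promote this from the dense $*$-subalgebra $C_c(\ma)$ to all of $C^*(\ma)$ by continuity: given $x\in C^*(\ma)$, pick $f_n\in C_c(\ma)$ with $\norm{f_n-x}_\ma\to 0$; since $\Lambda^\ma$ is contractive, $\Lambda^\ma_{f_n}\to\Lambda^\ma_x$, and since the inclusion is isometric we also have $\norm{f_n-x}_\mb\to 0$, whence $\Lambda^\mb_{f_n}\to\Lambda^\mb_x$ because $\Lambda^\mb$ is contractive as well. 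As $\norm{\Lambda^\ma_{f_n}}=\norm{\Lambda^\mb_{f_n}}$ for every $n$, taking limits gives the identity for $x$.

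Granting the identity, the conclusion is formal. Because $\Lambda^\ma$ maps $C^*(\ma)$ onto $C^*_r(\ma)$, every element of $C^*_r(\ma)$ has the form $\Lambda^\ma_x$, so $\mu^r$ is defined everywhere; and if $\Lambda^\ma_x=\Lambda^\ma_{x'}$ then $\norm{\Lambda^\mb_{x-x'}}=\norm{\Lambda^\ma_{x-x'}}=0$, so $\Lambda^\mb_x=\Lambda^\mb_{x'}$, showing that $\mu^r$ is well defined; the identity itself shows $\norm{\mu^r(\Lambda^\ma_x)}=\norm{\Lambda^\mb_x}=\norm{\Lambda^\ma_x}$, so $\mu^r$ is isometric. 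Finally $\mu^r$ is a $*$-homomorphism, since $\mu^r(\Lambda^\ma_x\Lambda^\ma_y)=\mu^r(\Lambda^\ma_{xy})=\Lambda^\mb_{xy}=\Lambda^\mb_x\Lambda^\mb_y$ and $\mu^r((\Lambda^\ma_x)^*)=\Lambda^\mb_{x^*}=(\Lambda^\mb_x)^*$, using that $\Lambda^\ma$ and $\Lambda^\mb$ are $*$-homomorphisms and that the algebraic operations of $C^*(\ma)$ are induced from those of $C^*(\mb)$. The main obstacle is therefore entirely contained in the norm identity, that is, in \cite[Proposition~3.2]{fa}; everything after that is bookkeeping made possible by the full-algebra inclusion.
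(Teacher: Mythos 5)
Your proposal is correct, but it takes a genuinely different route from the paper. You reduce everything to the norm identity $\norm{\Lambda^\ma_x}=\norm{\Lambda^\mb_x}$, obtaining it on $C_c(\ma)$ from \cite[Proposition~3.2]{fa} and then promoting it to all of $C^*(\ma)$ by density, using that the hypothesis $C^*(\ma)\subseteq C^*(\mb)$ makes the inclusion isometric, so that approximation in $\norm{\cdot}_\ma$ is simultaneously approximation in $\norm{\cdot}_\mb$; after that, well-definedness, isometry and multiplicativity of $\mu^r$ are formal, as you say. The paper's proof does not invoke \cite[Proposition~3.2]{fa} at all: it fixes a representation $\pi$ of $\mb$ with $\pi|_{B_e}$ faithful and appeals to the Exel--Ng theorem \cite{exeng}, which yields isomorphisms $\Psi_\mb:C^*_R(\mb)\to C^*_r(\mb)$ and $\Psi_\ma:C^*_R(\ma)\to C^*_r(\ma)$ intertwining the regular representations with $\pi_\lambda=\lambda\otimes\pi$ and its restriction to $C^*(\ma)$ (this restriction makes sense precisely because of the hypothesis); the commutative diagram \eqref{eqn:euer} then gives $\Lambda^\ma_y=0\iff\pi_\lambda(y)=0\iff\Lambda^\mb_y=0$, so $\mu^r$ is well defined and injective, hence isometric, being a $*$-homomorphism. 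What each approach buys: yours is shorter and more elementary, but it outsources the entire analytic content to \cite[Proposition~3.2]{fa} — a legitimate move, since the paper itself uses that result in Corollary~\ref{cor:subalgebra} and in the paragraph preceding this proposition; the paper's argument is independent of that reference and, in exchange for invoking the heavier machinery of \cite{exeng}, produces a concrete spatial picture of both reduced algebras acting on $L^2(G)\otimes H$, which is the real point of the proposition (``a very concrete way of identifying $C^*_r(\ma)$ inside $C^*_r(\mb)$''). One minor remark: you justify isometry of the inclusion $C^*(\ma)\subseteq C^*(\mb)$ by the fact that injective $*$-homomorphisms are isometric, which is fine; note also that in this paper the inclusion hypothesis means, by construction (see the proof of Theorem~\ref{thm:subalgebra}), that $\norm{\cdot}_\ma=\norm{\cdot}_\mb$ on $C_c(\ma)$, so the isometry on the dense subalgebra, and hence on $C^*(\ma)$, holds essentially by definition.
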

\begin{proof}
  Let $\mb$ be a Fell bundle over the locally compact group $G$, $\pi:\mb\to B(H)$ be a represntation of $\mb$ on the Hilbert space $H$, and $\lambda:G\to B(L^2(G))$ the left regular representation of $G$. Let $\pi_\lambda:\mb\to B(L^2(G)\otimes H)$ be the representation $\pi_\lambda(b_t)=\lambda_t\otimes\pi(b_t)$. We also denote by $\pi_\lambda:C^*(\mb)\to B(L^2(G)\otimes H)$ the corresponding integrated representation. Let $C^*_R(\mb):=\pi_\lambda(C^*(\mb))\subseteq B(L^2(G)\otimes G)$. In \cite{exeng} it was proved that, in case $\pi|_{B_e}$ is faithful, then there exists an isomorphism $\Psi_\mb:C^*_R(\mb)\to C^*_r(\mb)$ such that $\Lambda_{x}^\mb=\Psi_\mb\pi_\lambda(x)$, $\forall x\in C^*(\mb)$. Let $\rho:=\pi|_\ma:\ma\to B(H)$ be the restriction of $\pi$ to the Fell subbundle $\ma$. If $\pi|_{B_e}$ is faithful, then so is $\rho|_{A_e}$, so by the above mentioned result there exists an isomorphism $\Psi_\ma:C^*_R(\ma)\to C^*_r(\ma)$ such that $\Lambda_{y}^\ma=\Psi_\ma\pi_\lambda(y)$, $\forall y\in C^*(\ma)$. Since $C^*(\ma)\subseteq C^*(\mb)$, it is clear that $\rho_\lambda=\pi_\lambda|_{C^*(\ma)}$, and the following diagram commutes:
\begin{equation}\label{eqn:euer}
  \xymatrix{
    &C^*(\ma)\ar[ld]_{\Lambda^\ma}\ar@{^{(}->}[r]\ar[d]^{\rho_\lambda}&C^*(\mb)\ar[d]_{\pi_\lambda}\ar[rd]^{\Lambda^\mb}&\\   
      C^*_r(\ma)&  C^*_R(\ma)\ar[l]_\cong^{\Psi_\ma}\ar@{^{(}->}[r]&C^*_R(\mb)\ar[r]_{\Psi_\mb}^\cong&C^*_r(\mb)
}
\end{equation}
So if $\Lambda^\ma_y=0$ for some $y\in C^*(\ma)$, then the diagram implies that also $\Lambda^\mb_y=0$. Therefore we have a well defined map $\mu^r:C^*_r(\ma)\to C^*_r(\mb)$ such that $\mu^r(\Lambda^\ma_y)=\Lambda^\mb_y$. Now it is clear that this is a homomorphism of $*$-algebras, so it is contractive. But again the diagram shows that $\Lambda^\mb_y=0$ implies $\Lambda^\ma_y=0$, so the homomorphism is isometric.   
\end{proof}

\begin{prop}\label{prop:e2}
The map $E^c:C_c(\mb)\subseteq L^2(\mb)\to C_c(\ma)\subseteq L^2(\ma)$ extends by continuity to a contractive $A_e$-linear map $E_2:L^2(\mb)\to L^2(\ma)$ 
\end{prop}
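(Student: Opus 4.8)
The plan is to prove that $E^c$ is contractive for the Hilbert-module norms induced by the $B_e$- and $A_e$-valued inner products, and then invoke density of $C_c$ to extend by continuity. The starting point is the identity, valid for every $f\in C_c(\mb)$,
\[\pr{E^c(f)}{E^c(f)}_{L^2(\ma)}=\int_G E(f(r))^*E(f(r))\,dr\in A_e,\]
which should be compared with $\pr{f}{f}_{L^2(\mb)}=\int_G f(r)^*f(r)\,dr\in B_e$.

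The key estimate proceeds fiberwise. Applying Lemma~\ref{lem:condexp} to $b=f(r)\in B_r$ gives $E(f(r))^*E(f(r))\leq E(f(r)^*f(r))$ in $B_e$ for each $r$; since both sides lie in the C*-subalgebra $A_e$, the inequality holds in $A_e$ as well. Integrating this pointwise inequality preserves the order, because the positive cone is closed and the Bochner integral of a continuous, compactly supported, positive-valued function is positive, so
\[\pr{E^c(f)}{E^c(f)}_{L^2(\ma)}\leq\int_G E(f(r)^*f(r))\,dr.\]
As $E|_{B_e}$ is bounded (Corollary~\ref{cor:condexp}) and $r\mapsto f(r)^*f(r)$ is continuous with compact support, $E|_{B_e}$ commutes with the Bochner integral, so the right-hand side equals $E\big(\pr{f}{f}_{L^2(\mb)}\big)$.

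Now I would take norms. Using that $\pr{f}{f}_{L^2(\mb)}\in B_e^+$, that $E|_{B_e}$ is contractive, and that $0\leq x\leq y$ implies $\norm{x}\leq\norm{y}$ in a C*-algebra (the $A_e$-norm agreeing with the $B_e$-norm under the inclusion $A_e\subseteq B_e$), I obtain
\[\norm{E^c(f)}_{L^2(\ma)}^2\leq\norm{E\big(\pr{f}{f}_{L^2(\mb)}\big)}_{A_e}\leq\norm{\pr{f}{f}_{L^2(\mb)}}_{B_e}=\norm{f}_{L^2(\mb)}^2.\]
Hence $E^c$ is contractive from $\big(C_c(\mb),\norm{\ }_{L^2(\mb)}\big)$ to $\big(C_c(\ma),\norm{\ }_{L^2(\ma)}\big)$, and since $C_c(\mb)$ and $C_c(\ma)$ are dense in the respective completions, $E^c$ extends uniquely to a contraction $E_2:L^2(\mb)\to L^2(\ma)$.

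Finally, for $A_e$-linearity I would check the module identity on $C_c(\mb)$ and pass to the limit: for $a\in A_e\subseteq\ma$ and $f\in C_c(\mb)$ the bimodule property of $E$ yields $E^c(fa)(t)=E(f(t)a)=E(f(t))a=(E^c(f)a)(t)$, so $E^c(fa)=E^c(f)a$, and continuity gives $E_2(\xi a)=E_2(\xi)a$ for all $\xi\in L^2(\mb)$, $a\in A_e$. I expect the only delicate point to be the justification that $\int_G$ preserves the order and commutes with $E$; this is routine once one records that $E|_{B_e}$ is a bounded linear map and that the relevant integrands are continuous and compactly supported, so everything reduces to standard facts about Bochner integration of positive-valued functions.
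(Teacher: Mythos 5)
Your proposal is correct and takes essentially the same route as the paper's own proof: the fiberwise inequality $E(b)^*E(b)\leq E(b^*b)$ from Lemma~\ref{lem:condexp}, integration of the pointwise estimate, pulling $E|_{B_e}$ outside the integral to get $\pr{E^c(f)}{E^c(f)}_{L^2(\ma)}\leq E\big(\pr{f}{f}_{L^2(\mb)}\big)$, the norm chain via contractivity of $E|_{B_e}$, and the same bimodule computation for $A_e$-linearity followed by extension by density. The only difference is that you make explicit the routine justifications (order-preservation of the integral, interchange of $E$ with the Bochner integral) that the paper leaves implicit.
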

\begin{proof}
  Since $E$ is a conditional expectation into $\ma$, if $\eta\in C_c(\mb)$ and $a\in A_e$ we have: $E^c(\eta a)(t)=E(\eta(t)a)=E(\eta(t))a=E^c(\eta)(t)a=E^c(\eta)a(t)$, so $E^c(\eta a)=E^c(\eta)a$. As for the continuity, if $\eta\in C_c(\mb)$:
  \begin{gather*}
    \pr{E^c(\eta)}{E^c(\eta)}_{L^2(\ma)}
    =\int_GE^c(\eta)(r)^*E^c(\eta)(r)dr
    =\int_GE(\eta(r))^*E(\eta(r))dr\\
    =\int_GE(\eta(r)^*)E(\eta(r))dr
    \leq \int_GE(\eta(r)^*\eta(r))dr
    =E\big(\int_G\eta(r)^*\eta(r)dr\big)\\
    =E\big(\pr{\eta}{\eta}_{L^2(\mb)}\big). 
  \end{gather*}
  Then
  \begin{gather*}
    \norm{E^c(\eta)}^2_{L^2(\ma)}=\norm{\pr{E^c(\eta)}{E^c(\eta)}_{L^2(\ma)}}
    \leq \norm{E(\pr{\eta}{\eta}_{L^2(\mb)})}\\
    \leq\norm{\pr{\eta}{\eta}_{L^2(\mb)}}
    =\norm{\eta}^2_{L^2(\mb)}.
  \end{gather*} 
  \end{proof}

  \par Consider now the map $[\,,\,]:L^2(\mb)\times L^2(\mb)\to A_e$, given by $[\xi,\eta]:=E(\pr{\xi}{\eta}_{L^2(\mb)})$. Then $[\,,\,]$ is a semi-inner product on the right $A_e$-module $L^2(\mb)$, so $\xi\mapsto \norm{[\xi,\xi]}^{1/2}$ is a seminorm on $L^2(\mb)$. Let $L^2_E$ be the Hausdorff completion of $L^2(\mb)$, and $P:L^2(\mb)\to L^2_E$ the canonical map. Then $L^2_E$ is a right Hilbert $A_e$-module with the inner product $\pr{\,}{}_E$ characterized by $\pr{P\xi}{P\eta}_E=[\xi,\eta]$, that is $\pr{P\xi}{P\eta}_E=E(\pr{\xi}{\eta}_{L^2(\mb)})$ (see \cite[pages 3,4]{l}).

\begin{prop}\label{prop:fie}
Let $T\in\mathcal{L}(L^2(\mb))$, Then $T$ induces an adjointable operator $T^E\in \mathcal{L}(L^2_E)$, characterized by $T^EP\xi=PT\xi$, $\forall \xi\in L^2(\mb)$. The map $\varphi_E:\mathcal{L}(L^2(\mb))\to\mathcal{L}(L^2_E)$ thus defined, $T\mapsto T^E$, is a unital homomorphism of C*-algebras. 
\end{prop}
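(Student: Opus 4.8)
The plan is to let a single elementary inequality do all the work, namely the Hilbert-module estimate combined with positivity of $E$ on $B_e$. First I would recall that any $T\in\mathcal{L}(L^2(\mb))$ satisfies $T^*T\leq\norm{T}^2$ in $\mathcal{L}(L^2(\mb))$, so for every $\xi\in L^2(\mb)$ one has, in $B_e$,
\[\pr{T\xi}{T\xi}_{L^2(\mb)}=\pr{\xi}{T^*T\xi}_{L^2(\mb)}\leq\norm{T}^2\pr{\xi}{\xi}_{L^2(\mb)},\]
the last inequality being the standard fact that $0\leq S\leq\lambda$ forces $\pr{\xi}{S\xi}\leq\lambda\pr{\xi}{\xi}$ (apply the positive square root of $\lambda-S$). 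Since $E|_{B_e}$ is a conditional expectation, hence positive, applying $E$ yields $[T\xi,T\xi]\leq\norm{T}^2[\xi,\xi]$ in $A_e$, and therefore $\norm{PT\xi}_E\leq\norm{T}\,\norm{P\xi}_E$ for all $\xi$.

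This estimate immediately gives the existence and boundedness of $T^E$. Taking $\xi$ with $[\xi,\xi]=0$ shows $[T\xi,T\xi]=0$, so $T$ maps the null space of the seminorm into itself; consequently the rule $P\xi\mapsto PT\xi$ is well defined on the dense subspace $P(L^2(\mb))\subseteq L^2_E$, and by the estimate it is bounded there by $\norm{T}$. Hence it extends uniquely by continuity to an operator $T^E\in B(L^2_E)$ with $T^EP\xi=PT\xi$ and $\norm{T^E}\leq\norm{T}$, which is exactly the asserted characterization.

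Next I would verify adjointability by a direct computation on the dense subspace: for $\xi,\eta\in L^2(\mb)$,
\[\pr{T^EP\xi}{P\eta}_E=\pr{PT\xi}{P\eta}_E=E\big(\pr{T\xi}{\eta}_{L^2(\mb)}\big)=E\big(\pr{\xi}{T^*\eta}_{L^2(\mb)}\big)=\pr{P\xi}{(T^*)^EP\eta}_E.\]
Thus $T^E$ admits the adjoint $(T^*)^E$, so $T^E\in\mathcal{L}(L^2_E)$ and $(T^E)^*=(T^*)^E$, i.e. $\varphi_E$ is $*$-preserving. The remaining algebraic properties then follow formally from the identity $T^EP\xi=PT\xi$: linearity of $T\mapsto T^E$ is clear; multiplicativity follows from $(ST)^EP\xi=PST\xi=S^EPT\xi=S^ET^EP\xi$ on the dense subspace and continuity; and $\mathrm{id}^EP\xi=P\xi$ gives $\varphi_E(\mathrm{id})=\mathrm{id}_{L^2_E}$, so $\varphi_E$ is unital.

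I do not expect a genuine obstacle here; the only point requiring care is that the norm bound must survive the passage to the Hausdorff completion $L^2_E$, and this is precisely what the combination of the Hilbert-module inequality with the positivity of $E$ guarantees. (Conceptually this is the standard observation that adjointable operators are compatible with the interior-tensor-product/localization construction defining $L^2_E$, but the self-contained argument above avoids invoking that machinery.)
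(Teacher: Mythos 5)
Your proof is correct and takes essentially the same route as the paper's: the key estimate $E\big(\pr{T\xi}{T\xi}_{L^2(\mb)}\big)\leq\norm{T}^2E\big(\pr{\xi}{\xi}_{L^2(\mb)}\big)$ (where the paper simply cites Lance's Proposition~1.2 for the module inequality you prove inline via the square root of $\norm{T}^2-T^*T$), followed by the identical dense-subspace computation showing $(T^E)^*=(T^*)^E$. The only difference is that you spell out the well-definedness on the quotient and the routine homomorphism verifications (linearity, multiplicativity, unitality), which the paper leaves to the reader.
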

\begin{proof}
  Given $\xi\in L^2(\mb)$, we have $T\xi\in L^2(\mb)$. Now, since $E$ is increasing, and using \cite[Proposition~1.2]{l}, we have:  
  \begin{gather*}
    \pr{PT\xi}{PT\xi}_E=E(\pr{T\xi}{T\xi}_{L^2(\mb)})
    =E(\pr{T^*T\xi}{\xi}_{L^2(\mb)})\\
    \leq \norm{T}^2E(\pr{\xi}{\xi}_{L^2(\mb)})
    =\norm{T}^2\pr{P\xi}{P\xi}_E
  \end{gather*}
  which shows that $P\xi\mapsto PT\xi$ is a bounded map with norm at most equal to $\norm{T}$, so it extends to a bounded map $T^E:L^2_E\to L^2_E$. Moreover, if $\xi,\eta\in L^2(\mb)$:
  \begin{gather*}
    \pr{T^EP\xi}{P\eta}_E=\pr{PT\xi}{P\eta}_E=E(\pr{T\xi}{\eta}_{L^2(\mb)})\\
    =E(\pr{\xi}{T^*\eta}_{L^2(\mb)})=\pr{P\xi}{PT^*\eta}_{E}
    =\pr{P\xi}{(T^*)^EP\eta}_{E}.
  \end{gather*}
  Thus $T^E$ is adjointable and $(T^E)^*=(T^*)^E$. The rest of the proof is routine and left to the reader. 
\end{proof}

\begin{prop}\label{prop:isomV}
Let $V:L^2(\ma)\to L^2_E$ be given by $V\xi:=P\xi$. Then $V$ is an adjointable isometry, whose adjoint $V^*$ is determined by $V^*P\eta=E^c\eta$, $\forall \eta\in C_c(\mb)$.
\end{prop}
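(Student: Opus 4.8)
The plan is to build on the isometric inclusion $L^2(\ma)\subseteq L^2(\mb)$, so that $V=P|_{L^2(\ma)}$ genuinely lands in $L^2_E$. This inclusion holds because for $\xi,\eta\in C_c(\ma)$ the section $r\mapsto\xi(r)^*\eta(r)$ already takes values in $A_e$, whence $\pr{\xi}{\eta}_{L^2(\mb)}=\pr{\xi}{\eta}_{L^2(\ma)}\in A_e$; in particular $C_c(\ma)$ is dense in $L^2(\ma)$ and sits inside $C_c(\mb)\subseteq L^2(\mb)$. First I would verify the isometry property on this dense subspace: for $\xi,\eta\in C_c(\ma)$,
\[\pr{V\xi}{V\eta}_E=\pr{P\xi}{P\eta}_E=E\big(\pr{\xi}{\eta}_{L^2(\mb)}\big)=E\big(\pr{\xi}{\eta}_{L^2(\ma)}\big)=\pr{\xi}{\eta}_{L^2(\ma)},\]
where the second equality is the defining property of $\pr{\,}{}_E$ and the last uses that $E$ restricts to the identity on $A_e$. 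Since $V$ is continuous and $C_c(\ma)$ is dense in $L^2(\ma)$, $V$ is an isometry.

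For adjointability I would exhibit the adjoint explicitly. Define $W$ on the dense subspace $P(C_c(\mb))\subseteq L^2_E$ by $W(P\eta):=E^c\eta$. That $W$ is well defined and contractive is exactly the content of Proposition~\ref{prop:e2}: the computation there yields $\pr{E^c\eta}{E^c\eta}_{L^2(\ma)}\leq E\big(\pr{\eta}{\eta}_{L^2(\mb)}\big)=\pr{P\eta}{P\eta}_E$, so the kernel of $P$ is contained in that of $E^c$ (guaranteeing well-definedness) and $\norm{W(P\eta)}_{L^2(\ma)}\leq\norm{P\eta}_E$; hence $W$ extends to a contraction $L^2_E\to L^2(\ma)$.

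The decisive step is then the adjoint relation, which I would check on the dense pairs $\xi\in C_c(\ma)$, $\eta\in C_c(\mb)$:
\[\pr{V\xi}{P\eta}_E=E\big(\pr{\xi}{\eta}_{L^2(\mb)}\big)=\int_GE\big(\xi(r)^*\eta(r)\big)\,dr=\int_G\xi(r)^*E(\eta(r))\,dr=\pr{\xi}{E^c\eta}_{L^2(\ma)}.\]
Here $E$ is pulled inside the integral by boundedness and linearity, and the crucial middle equality uses the module identity $E(ab)=aE(b)$ for $a=\xi(r)^*\in\ma$ and $b=\eta(r)\in\mb$ (a consequence of the bimodule axiom in Definition~\ref{defn:condexp} together with Lemma~\ref{lem:condexp}, $\ma$ lying in the multiplicative domain of $E$); the final equality again uses that the $L^2(\mb)$- and $L^2(\ma)$-inner products agree on $C_c(\ma)$. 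This reads $\pr{V\xi}{P\eta}_E=\pr{\xi}{W(P\eta)}_{L^2(\ma)}$, and by continuity in each variable and density it extends to $\pr{V\xi}{\zeta}_E=\pr{\xi}{W\zeta}_{L^2(\ma)}$ for all $\xi\in L^2(\ma)$ and $\zeta\in L^2_E$. Thus $V$ is adjointable with $V^*=W$, so in particular $V^*P\eta=E^c\eta$ for all $\eta\in C_c(\mb)$, as asserted.

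I expect the only genuinely delicate point to be the well-definedness of $W$ as a map on $L^2_E$ rather than on $L^2(\mb)$ — one must ensure that $P\eta=P\eta'$ forces $E^c\eta=E^c\eta'$ — but this is settled cleanly by the inequality from Proposition~\ref{prop:e2}; everything else is a routine density-and-continuity argument once the pointwise module identity is in hand.
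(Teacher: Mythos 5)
Your proof is correct and takes essentially the same route as the paper's: the same isometry computation via $\pr{P\xi}{P\eta}_E=E\big(\pr{\xi}{\eta}_{L^2(\mb)}\big)$ together with the agreement of the two inner products on $C_c(\ma)$, the same use of the inequality from Proposition~\ref{prop:e2} to define the candidate adjoint $P\eta\mapsto E^c\eta$ as a well-defined contraction on $P(C_c(\mb))$, and the same dense-pair computation pulling $E$ through the integral and invoking the module identity $E\big(\xi(r)^*\eta(r)\big)=\xi(r)^*E\big(\eta(r)\big)$. Your added attention to the well-definedness of $W$ and to the density-and-continuity steps only makes explicit what the paper leaves implicit.
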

\begin{proof}
  If $\xi_1,\xi_2\in L^2(A)$:
$    \pr{V\xi_1}{V\xi_2}_E=\pr{P\xi_1}{P\xi_2}_E=E(\pr{\xi_1}{\xi_2}_{L^2(\mb)})
\\=E(\pr{\xi_1}{\xi_2}_{L^2(\ma)})=\pr{\xi_1}{\xi_2}_{L^2(\ma)},$ which shows that $V$ is an isometry.
\par Now if $\eta\in C_c(\mb)$, as shown along the proof of Proposition~\ref{prop:e2}, we have $\pr{E^c(\eta)}{E^c(\eta)}_{L^2(\ma)}
  \leq E\big(\pr{\eta}{\eta}_{L^2(\mb)}\big)$.

Thus the map $P\eta\mapsto E^c\eta$ from $P(C_c(\mb))$ to $C_c(\ma)$ is well defined and is contractive, so it extends to a contraction $L^2_E\to L^2(\ma)$. Finally, if $\xi\in C_c(\ma)$, $\eta\in C_c(\mb)$:
\begin{gather*}
  \pr{V\xi}{P\eta}_E=\pr{P\xi}{P\eta}_E=E(\pr{\xi}{\eta}_{L^2(\mb)})
  =E\big(\int_G\xi(r)^*\eta(r)dr\big)\\
  =\int_GE\big(\xi(r)^*\eta(r)\big)dr
  =\int_G\xi(r)^*E\big(\eta(r)\big)dr
  =\int_G\xi(r)^*E^c(\eta)(r)dr\\
  =\pr{\xi}{E^c(\eta)}_{L^2(\ma)}
  \end{gather*}
Consequently $V$ is adjointable, and $V^*(P\eta)=E^c(\eta)$ $\forall \eta\in C_c(\mb)$. 
\end{proof}
\medskip
\par The isometry $V$ is useful to better understand the structure of the Hilbert $A_e$-module $L^2_E$. For if $\mathsf{e}:=VV^*\in\mathcal(L^2_E)$, then $\mathsf{e}^*\mathsf{e}=(VV^*)(VV^*)=V(V^*V)V^*=VV^*=\mathsf{e}$, so $\mathsf{e}$ is a projection. Thus we can decompose $L^2_E=Y\oplus Y^\perp$ as Hilbert $A_e$-modules, where $Y=\mathsf{e}(L^2_E)=V(L^2(\ma))$ (and therefore $Y^\perp=(\mathsf{id}-\mathsf{e}_(L^2_E)$). Then $\mathcal{L}(L^2_E)\cong \begin{pmatrix}\mathcal{L}(Y)&\mathcal{L}(Y^\perp,Y)\\\mathcal{L}(Y,Y^\perp)&\mathcal{L}(Y^\perp)\end{pmatrix}$.\par By means of the isometry $V$ we still define one more map that is important to our purposes, namely $Ad_V:\mathcal{L}(L^2_E)\to \mathcal{L}(L^2(\ma))$, such that $Ad_V(S):=V^*SV$.
\begin{lem}\label{lem:last}
  Let $x\in C^*(\mb)$. Then we have $Ad_V((\Lambda^\mb_x)^E)=\Lambda^\ma_{E^u(x)}$ $\forall x\in C^*(\mb)$, that is, the following diagram is commutative:
\[
\xymatrix{
  L^2(\ma)\ar[r]^{\ \ V}\ar[d]_{\Lambda^\ma_{E^u(x)}}&L^2_E\ar[d]^{(\Lambda^\mb_x)^E}\\
  L^2(\ma)&L^2_E\ar[l]^{\ \ V^*}
  }
 \]  
\end{lem}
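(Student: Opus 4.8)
The plan is to exploit that both sides of the asserted identity are norm-continuous (indeed contractive) linear functions of $x$, and to reduce the verification to a dense subset of $C^*(\mb)$. For the left-hand side, $x\mapsto \Lambda^\mb_x$ is a $*$-homomorphism, $\varphi_E\colon T\mapsto T^E$ is a unital homomorphism of C*-algebras by Proposition~\ref{prop:fie}, and $S\mapsto V^*SV$ is contractive since $V$ is an isometry; hence $x\mapsto Ad_V((\Lambda^\mb_x)^E)$ is bounded and linear. For the right-hand side, $E^u$ is contractive by Theorem~\ref{thm:subalgebra} and $y\mapsto\Lambda^\ma_y$ is a $*$-homomorphism, so $x\mapsto\Lambda^\ma_{E^u(x)}$ is bounded and linear as well. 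Since $C_c(\mb)$ is dense in $C^*(\mb)$, it therefore suffices to establish the identity for $x=f\in C_c(\mb)$.

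So I would fix $f\in C_c(\mb)$ and test the two operators on the dense set $C_c(\ma)\subseteq L^2(\ma)$. Recall that the integrated regular representations act by convolution, $\Lambda^\mb_f\eta=f*\eta$ and $\Lambda^\ma_g\xi=g*\xi$. Using $V\xi=P\xi$ and $V^*P\eta=E^c(\eta)$ from Proposition~\ref{prop:isomV}, together with the defining property $(\Lambda^\mb_f)^EP\eta=P(\Lambda^\mb_f\eta)$ from Proposition~\ref{prop:fie}, for $\xi\in C_c(\ma)$ one computes
\[
Ad_V((\Lambda^\mb_f)^E)\xi=V^*(\Lambda^\mb_f)^EP\xi=V^*P(f*\xi)=E^c(f*\xi),
\]
the point being that $f*\xi\in C_c(\mb)$. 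On the other hand, since $E^u$ extends $E^c$ on $C_c(\mb)$, we have $E^u(f)=E^c(f)\in C_c(\ma)$, and thus $\Lambda^\ma_{E^u(f)}\xi=E^c(f)*\xi$.

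The crux is then the pointwise identity $E^c(f*\xi)=E^c(f)*\xi$. Evaluating at $r\in G$, one has $(f*\xi)(r)=\int_G f(s)\xi(s^{-1}r)\,ds\in B_r$, and since the restriction $E|_{B_r}$ is a bounded linear map it commutes with this (Bochner) integral, giving $E^c(f*\xi)(r)=\int_G E\big(f(s)\xi(s^{-1}r)\big)\,ds$. Because $\xi(s^{-1}r)\in A_{s^{-1}r}\subseteq\ma$, the $\ma$-bimodule property of $E$ yields $E\big(f(s)\xi(s^{-1}r)\big)=E(f(s))\,\xi(s^{-1}r)$, whence $E^c(f*\xi)(r)=\int_G E^c(f)(s)\xi(s^{-1}r)\,ds=(E^c(f)*\xi)(r)$. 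Combining this with the previous two computations gives $Ad_V((\Lambda^\mb_f)^E)\xi=\Lambda^\ma_{E^u(f)}\xi$ for all $\xi\in C_c(\ma)$, hence $Ad_V((\Lambda^\mb_f)^E)=\Lambda^\ma_{E^u(f)}$ by density, and the continuity argument of the first paragraph upgrades this to all $x\in C^*(\mb)$.

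I expect the only delicate point to be the interchange of $E$ with the convolution integral; this is harmless because $E|_{B_r}\colon B_r\to A_r$ is bounded and linear, but it is exactly where the continuity and quasi-linearity of $E$, together with its $\ma$-bimodule property, are genuinely used. Everything else is bookkeeping with the definitions of $P$, $V$ and $\varphi_E$ plus two density reductions.
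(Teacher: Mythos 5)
Your proposal is correct and takes essentially the same route as the paper's proof: both reduce by continuity and density to $x=f\in C_c(\mb)$, test on $\xi\in C_c(\ma)$, and compute $V^*(\Lambda^\mb_f)^EV\xi=V^*P(f*\xi)=E^c(f*\xi)=E^c(f)*\xi=\Lambda^\ma_{E^c(f)}\xi$ using the defining properties of $P$, $V$, $V^*$ and $\varphi_E$. The only difference is that you spell out the justification of the key identity $E^c(f*\xi)=E^c(f)*\xi$ (interchanging the fiberwise bounded linear map $E$ with the Bochner convolution integral and invoking the $\ma$-bimodule property of $E$), a step the paper asserts without comment.
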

\begin{proof}
  Given $\xi\in C_c(\ma)\subseteq L^2(\ma)$ and $f\in C_c(\mb)\subseteq C^*(\mb)$: 
  \[V^*(\Lambda^\mb_f)^EV\xi=V^*P\Lambda^\mb_f\xi=E^c(f*\xi)=E^c(f)*\xi=\Lambda^\ma_{E^c(f)}(\xi).\]
  Then $V^*(\Lambda^\mb_f)^EV=\Lambda^\ma_{E^c(f)}$ $\forall f\in C_c(\mb)$. Since $C_c(\mb)$ is dense in $C^*(\mb)$ and $E^u$ is the continuous extension of $E^c$ to $C^*(\mb)$, we conclude that the continuous maps $x\mapsto V^*(\Lambda^\mb_x)^EV$ and $x\mapsto \Lambda^\ma_{E^u(x)}$ agree. 
  \end{proof}

%{E^u_{\scalebox{0.5}{\ma}}(x)}

Define $\phi:\mathcal{L}(L^2(\mb))\to\mathcal{L}(L^2(\ma))$  
  by $\phi=Ad_V\circ\varphi_E$ (Proposition~\ref{prop:fie} and Lemma~\ref{lem:last}). According to Lemma~\ref{lem:last} we have $\phi(\Lambda^\mb_x)=\Lambda^\ma_{E^u(x)}$ $\forall x\in C^*(\mb)$, so we have $\phi(C^*_r(\mb))=C^*_r(\ma)$. 

\begin{prop}\label{prop:subalgebrared}
The map $\phi$ is a positive contraction, such that
$\phi(\Lambda_x^\mb)=\Lambda_{E^u(x)}^\ma$ for all $x\in C^*(\mb)$. Besides 
$\phi(C_r^*(\mb))=C_r^*(\ma)$ and $\phi\mu^r=Id_{C_r^*(\ma)}$ (the map $\mu^r$ was defined in Proposition~\ref{prop:identifying}).
\end{prop}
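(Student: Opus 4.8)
The plan is to assemble the proposition from the ingredients already built: the $*$-homomorphism $\varphi_E$ of Proposition~\ref{prop:fie}, the isometry $V$ of Proposition~\ref{prop:isomV}, the intertwining identity of Lemma~\ref{lem:last}, and the fact, from Theorem~\ref{thm:subalgebra}, that $E^u$ is a genuine conditional expectation and hence fixes $C^*(\ma)$ pointwise. Essentially nothing new has to be constructed; the task is to combine these facts correctly.

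First I would establish that $\phi$ is a positive contraction. Since $\varphi_E:\mathcal{L}(L^2(\mb))\to\mathcal{L}(L^2_E)$ is a unital $*$-homomorphism of C*-algebras, it is automatically positive and contractive. The map $Ad_V(S)=V^*SV$ is positive, because $\pr{V^*SV\eta}{\eta}=\pr{SV\eta}{V\eta}\geq 0$ whenever $S\geq 0$, and it is contractive, because $V$ is an isometry, so that $\norm{V^*SV}\leq\norm{V^*}\,\norm{S}\,\norm{V}=\norm{S}$. Hence $\phi=Ad_V\circ\varphi_E$ is a composition of positive contractions, and therefore is itself a positive contraction.

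The identity $\phi(\Lambda^\mb_x)=\Lambda^\ma_{E^u(x)}$ is precisely the content of Lemma~\ref{lem:last}, so I would simply quote it. From it the surjectivity statement follows at once: writing $C^*_r(\mb)=\Lambda^\mb(C^*(\mb))$ and $C^*_r(\ma)=\Lambda^\ma(C^*(\ma))$, I get $\phi(C^*_r(\mb))=\{\Lambda^\ma_{E^u(x)}:x\in C^*(\mb)\}=\Lambda^\ma(E^u(C^*(\mb)))=\Lambda^\ma(C^*(\ma))=C^*_r(\ma)$, using that $E^u$ maps onto $C^*(\ma)$.

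Finally, for $\phi\mu^r=Id_{C^*_r(\ma)}$, I would evaluate on the generators $\Lambda^\ma_y$, $y\in C^*(\ma)$, of $C^*_r(\ma)$. By the definition of $\mu^r$ in Proposition~\ref{prop:identifying} one has $\mu^r(\Lambda^\ma_y)=\Lambda^\mb_y$, so $\phi(\mu^r(\Lambda^\ma_y))=\phi(\Lambda^\mb_y)=\Lambda^\ma_{E^u(y)}$; since $y\in C^*(\ma)$ and $E^u$ restricts to the identity on $C^*(\ma)$, this equals $\Lambda^\ma_y$. As these elements exhaust $C^*_r(\ma)$, the two maps agree. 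I do not expect a serious obstacle within this proposition itself, since the genuine work was already carried out in the construction of $L^2_E$, in Propositions~\ref{prop:fie}--\ref{prop:isomV}, and above all in the intertwining Lemma~\ref{lem:last}; the only point to keep in mind here is that $E^u$ truly fixes $C^*(\ma)$, which is guaranteed because Theorem~\ref{thm:subalgebra} identifies $E^u$ as an idempotent conditional expectation with range $C^*(\ma)$.
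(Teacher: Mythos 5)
Your proposal is correct and takes essentially the same route as the paper's own proof: positivity and contractivity of $\phi$ via the factorization $\phi=Ad_V\circ\varphi_E$, the identity $\phi(\Lambda^\mb_x)=\Lambda^\ma_{E^u(x)}$ quoted directly from Lemma~\ref{lem:last}, and the verification of $\phi\mu^r=Id_{C_r^*(\ma)}$ on the generators $\Lambda^\ma_y$ using that $E^u$ fixes $C^*(\ma)$. The only difference is that you make explicit some details the paper leaves implicit, namely the positivity argument for $Ad_V$ and the surjectivity computation $\phi(C^*_r(\mb))=\Lambda^\ma(E^u(C^*(\mb)))=C^*_r(\ma)$.
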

\begin{proof} 
  Since $V$ is an isometry, $Ad_V$ is a contraction, and is clearly positive as well. Also $\varphi_E$ is a positive contraction, being a homomorphism of C*-algebras. Therefore $\phi=Ad_V\circ\varphi_E$ is also positive and contractive. The fact that $\phi(\Lambda_x^\mb)=\Lambda_{E^u(x)}^\ma$ is just a rephrasing of Lemma~\ref{lem:last}, from where it follows also that $\phi(C_r^*(\mb))=C_r^*(\ma)$. Finally, if $y\in C^*(\ma)$, we have $\phi(\mu^r(\Lambda_y^\ma))=\phi(\Lambda_y^\mb)=\Lambda^\ma_{E^u(y)}=\Lambda_y^\ma$, which shows that $\phi\mu^r=Id_{C_r^*(\ma)}$. 
\end{proof}
\medskip
\par We arrive now to our second main result of this work: up to naturally identifying $C^*_r(\ma)$ inside $C^*_r(\mb)$, we can extend the conditional expectation $E:\mb\to\ma$ to a conditional expectation $E^r:C^*_r(\mb)\to C^*_r(\ma)$ between their reduced cross-sectional algebras.

\begin{thm}\label{thm:subalgebrared}
Let $E:\mb\to\ma$ be a continuous conditional expectation from the
Fell bundle $\mb$ to its Fell subbundle $\ma$, and let $C:=\mu^r(C^*_r(\ma))\subseteq C^*_r(\mb)$ (Proposition~\ref{prop:identifying}). Then $E$ can be
extended to a conditional expectation $E^r:C^*_r(\mb)\to C$. More precisely, the map given by $E^r(\Lambda_x^\mb):=\mu^r(\phi(\Lambda_{E^u(x)}^\mb))$, $\forall \Lambda_x^\mb\in C^*_r(\mb)$, is a conditional expectation from $C^*_r(\mb)$ to $C$.  
\end{thm}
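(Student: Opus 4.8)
The plan is to take the displayed formula at face value, show that once unwound it coincides with the composite $\mu^r\circ\phi$, and then verify the three properties that, by Tomiyama's theorem \cite[Theorem~1.5.10]{bo}, characterise a conditional expectation: that $E^r$ is contractive, idempotent, and has as its range the C*-subalgebra $C$.

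First I would assemble the ingredients. By Proposition~\ref{prop:subalgebrared} the map $\phi:C^*_r(\mb)\to C^*_r(\ma)$ is a positive contraction satisfying $\phi\mu^r=Id_{C^*_r(\ma)}$ and $\phi(C^*_r(\mb))=C^*_r(\ma)$, while by Proposition~\ref{prop:identifying} the map $\mu^r$ is an isometric $*$-homomorphism, so its range $C$ is a C*-subalgebra of $C^*_r(\mb)$. Using $\phi(\Lambda^\mb_y)=\Lambda^\ma_{E^u(y)}$ together with the idempotency of $E^u$ from Theorem~\ref{thm:subalgebra}, one has $\phi(\Lambda^\mb_{E^u(x)})=\Lambda^\ma_{E^u(E^u(x))}=\Lambda^\ma_{E^u(x)}$, so the stated formula reads $E^r(\Lambda^\mb_x)=\mu^r(\Lambda^\ma_{E^u(x)})=(\mu^r\circ\phi)(\Lambda^\mb_x)$. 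Hence $E^r=\mu^r\circ\phi$, which is the convenient form for everything that follows.

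With this identification the verifications are short. Contractivity is immediate, since $\mu^r$ is isometric and $\phi$ is a contraction, so their composite is contractive. Idempotency follows from $\phi\mu^r=Id$:
\[
E^r\circ E^r=\mu^r\phi\mu^r\phi=\mu^r\,(\phi\mu^r)\,\phi=\mu^r\phi=E^r.
\]
For the range, surjectivity of $\phi$ onto $C^*_r(\ma)$ together with injectivity of $\mu^r$ gives $E^r(C^*_r(\mb))=\mu^r(C^*_r(\ma))=C$, which is a C*-subalgebra by the previous paragraph. Tomiyama's theorem then promotes the contractive idempotent $E^r$ onto the C*-subalgebra $C$ to a completely positive, completely contractive bimodule map, i.e.\ a conditional expectation from $C^*_r(\mb)$ to $C$.

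Finally I would confirm that $E^r$ genuinely extends $E$. For $f\in C_c(\mb)$ we have $E^r(\Lambda^\mb_f)=\Lambda^\mb_{E^u(f)}=\Lambda^\mb_{E^c(f)}$, since $E^u$ restricts to $E^c$ on $C_c(\mb)$; as $E^c(f)\in C_c(\ma)$ and $\mu^r(\Lambda^\ma_{E^c(f)})=\Lambda^\mb_{E^c(f)}$, the map $E^r$ sends $\Lambda^\mb_f$ to the copy of $\Lambda^\ma_{E^c(f)}$ sitting inside $C$, exactly as an extension of $E^c$ should. I do not expect a genuine obstacle here: the substance of the argument was already carried out in Propositions~\ref{prop:identifying} and~\ref{prop:subalgebrared}, and the only point needing care is confirming that the displayed definition of $E^r$ coincides with $\mu^r\circ\phi$ before invoking Tomiyama's theorem.
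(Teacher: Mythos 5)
Your proof is correct and follows essentially the same route as the paper: both reduce the displayed formula to $E^r=\mu^r\circ\phi$ and obtain contractivity, idempotency (via $\phi\mu^r=Id_{C_r^*(\ma)}$ from Proposition~\ref{prop:subalgebrared}), and the range identification $E^r(C_r^*(\mb))=\mu^r(C_r^*(\ma))=C$ from Propositions~\ref{prop:identifying} and~\ref{prop:subalgebrared}. The only difference is the final step: where you invoke Tomiyama's theorem (a characterization the paper itself endorses in its introduction), the paper instead checks positivity and the $C$-bimodule property by the direct computation $E^r(\Lambda^\mb_x\Lambda^\mb_y)=E^r(\Lambda^\mb_x)\Lambda^\mb_y$ for $x\in C^*(\mb)$, $y\in C^*(\ma)$.
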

\begin{proof}
  The map $E^r$ is a positive contraction, because both $\mu^r$ and $\phi$ so are. Moreover $E^r(C^*_r(\mb))=\mu^r(\phi(C^*_r(\mb)))=\mu^r(C^*_r(\ma))=C$.
  Let $x\in C^*(\mb)$ and $y\in C^*(\ma)$, so $\Lambda_x^\mb\in C^*_r(\mb)$ and $\Lambda_y^\mb\in C$. We have

  \begin{gather*}
    E^r(\Lambda_x^\mb\Lambda_y^\mb)=E^r(\Lambda_{xy}^\mb)
    =\Lambda_{E^u(xy)}^\mb 
    =\Lambda_{E^u(x)y}^\mb
    =\Lambda_{E^u(x)}^\mb\Lambda_y^\mb
    =E^r(\Lambda_{x}^\mb)\Lambda_y^\mb.
  \end{gather*}
  Similarly, we have $  E^r(\Lambda_y^\mb\Lambda_x^\mb)=\Lambda_y^\mb E^r(\Lambda_{x}^\mb)$. Consequently, $E^r$ is a $C$-bimodule map.  
  Finally, since $\phi\mu^r=Id_{C_r^*(\ma)}$, for $\Lambda_x^\mb\in C^*_r(\mb)$ we have: 
\begin{gather*}
  (E^r)^2(\Lambda_x^\mb)=E^r\big(\mu^r(\phi(\Lambda_{E^u(x)}^\mb))\big)
  =\mu^r\Big(\phi\big(\mu^r(\phi(\Lambda_{E^u(x)}^\mb)) \big)\Big)\\
  =\mu^r\big(\phi(\Lambda_{E^u(x)}^\mb)\big)
 % =E^r(\Lambda_{E^u(x)}^\mb)
  =E^r(\Lambda_x^\mb).
\end{gather*}
Then $(E^r)^2=E^r$. We have shown that $E^r$ is an idempotent, positive, and contractive $C$-bimodule map whose image is $C$. That is, $E^r$ is a conditional expectation.    
\end{proof}

\end{document}